\begin{document}

\title{ Inverse approximation and GBS of bivariate Kantorovich type sampling series 
}


\author{A. Sathish Kumar         \and
         Bajpeyi Shivam $^{*}$ 
}



\institute{A. Sathish Kumar  \at
              Department of Mathematics, Visvesvaraya National Institute of Technology, Nagpur-440010
               \\
                          \email{mathsathish9@gmail.com } \vspace{4mm} \at  Bajpeyi Shivam$^{*}$(corresponding author)\at Department of Mathematics, Visvesvaraya National Institute of Technology, Nagpur-440010 \at \email{ shivambajpai1010@gmail.com}  }

\date{Received: date / Accepted: date}

\maketitle

\begin{abstract}

In this paper, we derive an inverse result for bivariate Kantorovich type sampling series for $ f \in C^{2}(\mathbb{R}^{2})$ (the space of all continuous functions with upto second order partial derivatives are continuous and  bounded on $ \mathbb{R}^{2}).$ Further, we prove the rate of approximation in the B$\ddot{o}$gel space of continuous functions for the GBS (Generalized Boolean Sum) of these operators. Finally, we give some examples for the kernel to which the theory can be applied.

\keywords{Inverse result \and Bivariate Kantorovich type Sampling series \and Rate of convergence \and GBS operators  \and mixed modulus of smoothness }
\subclass{41A25 \and 94A20 \and 41A30 \and 41A10}
\end{abstract}

\section{Introduction}
\label{intro}

The theory of generalized sampling series was first initiated by P. L. Butzer and his school \cite{Stens1} and \cite{Stens2}. Let $\chi :{\mathbb{R}}^2\rightarrow {\mathbb{R}}$ be a suitable kernel function.  Then, the two dimensional generalized sampling series of a function  $f:{\mathbb{R}}^2\rightarrow {\mathbb{R}}$ is defined by
\begin{eqnarray}\label{e1}
(G_w^{\chi}f)(x,y)&=&\sum_{k=-\infty}^{\infty}\sum_{j=-\infty}^{\infty}\chi \big (wx-k, wy-j \big) f \bigg(\frac{k}{w},\frac{j}{w}\bigg)
\end{eqnarray}
where $w > 0 $ and $(x,y)\in {\mathbb{R}}^2.$ These operators have great importance in the development of models for signal recovery. These type of operators have been studied by many authors (cf. \cite{bv1,BM*,PLB2,PLB1} etc.).\\
The Kantorovich type generalizations of approximation operators is an important subject in
approximation theory and they are the method to approximate Lebesgue integrable functions.
In the last few decades, the Kantorovich modifications of several operators were constructed and their approximation behaviour have been studied. We mention some of the work in this direction e.g., \cite{GV3,PNAI,gupta1,BM1,Tuncer,gupta2} etc.

Bardaro et.al.\cite{BVBS}, studied the rate of convergence of sampling Kantorovich operators in the general settings of Orlicz spaces. Danilo and Vinti \cite{COS}, Bardaro \cite{BM} extended their study in the multivariate setting and obtained the rate of convergence for functions in Orlicz spaces and also obtained the rate of approximation for the family of sampling Kantorovich operators in the uniform norm, for bounded and uniformly continuous, functions belonging to Lipschitz classes and for functions in Orlicz spaces in \cite{Dani}. The non-linear version of sampling Kantorovich series has been studied in \cite{Dan}, \cite{Zam} and \cite{DV}.
Butzer and Stens studied the saturation results for the univariate generalised sampling operators in (\cite{Stens2}). Recently, the inverse results for the Kantorovich type sampling operators have been derived for the spaces $C^{2}(\mathbb{R})$ and $C^{1}(\mathbb{R})$ in (\cite{GV3**}), (\cite{BCG}) respectively.\\

Let $ C^{r}(\mathbb{R} ^{2})$ denotes the space of all continuous functions with upto $r$-th order partial derivatives are continuous and
 bounded on $ \mathbb{R}^{2}.$ Assume that $\chi\in C^{0}( \mathbb{R}^{2})$ be fixed. For any $ \eta \in \mathbb{N}_{0}= \mathbb{N} \cup \{0 \},$ $ p= (p_{1},p_{2})\in \mathbb{N}_{0}^2$ with $ \vert p \vert = p_{1}+p_{2}= \eta,$ we define the algebraic moments as,
\begin{eqnarray*}
m_{( p_{1},p_{2})}^{\eta} (\chi,u,v):=\sum_{k=-\infty}^{\infty}\sum_{j=-\infty}^{\infty}
\chi(u-k,v-j)(u-k)^{p_{1}}(v-j)^{p_{2}}
\end{eqnarray*}
and the absolute moments by,
\begin{eqnarray*}
M_{( p_{1},p_{2})}^{\eta} (\chi,u,v):= \sup_{(x,y)\in\mathbb{R}^2}\sum_{k=-\infty}^{\infty}\sum_{j=-\infty}^{\infty}|\chi(u-k,v-j)|\, |u-k|^{p_{1}}\, |y-j|^{p_{2}}
\end{eqnarray*}
and,
$$ M_{\eta}(\chi):= \max_{\vert p \vert = \eta} \  M_{(p_{1},p_{2})}^{\eta} (\chi). $$
We can easily see that, for $ \xi,\eta \in \mathbb{N}_{0}$ with $\xi < \eta,$  $ M_{\eta}(\chi)< + \infty$ implies that $M_{\xi}(\chi)< + \infty.$
Also note that, when $\chi$ is compactly supported then we have, $ M_{\eta}(\chi)< \infty$, for every $ \eta \in \mathbb{N}_{0}.$ (see \cite{bv1})\\

The bivariate Kantorovich version of the generalized sampling series is defined as,
\begin{equation} \label{e2}
(S_{w} f)(x,y) = \sum_{k \in Z} \sum_{j \in Z} \chi(wx-k, wy-j)
\Bigg[ w^{2}  \int_{\frac{k}{w}}^{\frac{k+1}{w}} \int_{\frac{j}{w}}^{\frac{j+1}{w}} f(u,v)\  du \ dv \Bigg ], \ \ \ \ \  \forall (x,y) \in \mathbb{R}^{2},
\end{equation}

where $f : \mathbb{R}^{2}\rightarrow \mathbb{R} $ is a locally integrable function, such that the above series is convergent for every $(x,y) \in \mathbb{R}^{2}$ and $ \chi: \mathbb{R}^{2} \rightarrow \mathbb{R}$ is a kernel function.\\

We assume that the following conditions hold;\\

$(\textbf{K1}) \ \ \chi $ is integrable and bounded at the origin.\\

$(\textbf{K2}) \ $ The series $ \displaystyle{\sum_{k \in \mathbb{Z}} \sum_{j \in \mathbb{Z}}} \chi(u-k, v-j) =1 ,$ for every $(u,v) \in \mathbb{R} ^{2},$ \\

$(\textbf{K3})\ \ M_{(p_{1},p_{2})}^{\eta}(\chi) < +\infty $ , for some $ (p_{1},p_{2}) \in \mathbb{N}^{2}.$ \\

\begin{theorem}
Let $\chi$ be the kernel satisfying the following moment condition,
\begin{equation} \label{e3}
 m_{(p_{1},p_{2})}^{ \eta}(\chi,u,v) =
     \begin{cases}
      \text{0,} &\quad\text{if,} \ \ \ \ {\eta =1,2,3...,r-1} \\
     \text{c,} &\quad\text{if,} \ \ \ \ { \eta =r}\\
   \end{cases}
\end{equation}
for every $(u,v) \in{\mathbb{R}}^{2}$ and some $ r \in \mathbb{N}^{+}.$ Then, for every $ f \in C^{r}(\mathbb{R}^{2})$, we have,
$$ \Vert G_{w}f - f \Vert _{\infty} \leq \frac{c}{r!}\frac{M_{r} (\chi)}{ w^{r}}. H $$ where, $$ M_{r} (\chi)=\max _{\eta = r} \  M_{(p_{1},p_{2})}^{r} (\chi).$$
Here,
  $ \ H = \binom {r}{0} A_{r} + \binom {r}{1} A_{r-1} B_{1} + \binom {r}{2} A_{r-2} B_{2}+...+ \binom {r}{r} B_{r} ,$  with
$$ A_{r} = \Big \Vert \frac{\delta ^{r}f}{\delta x^{r}} \Big \Vert_{\infty}  \ and \ B_{r} = \Big \Vert \frac{\delta ^{r}f}{\delta y^{r}} \Big \Vert_{\infty} .$$
\end{theorem}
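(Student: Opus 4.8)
The plan is to combine the partition-of-unity property \textbf{(K2)} with an order-$r$ Taylor expansion about the running point $(x,y)$, and then let the moment hypothesis \eqref{e3} annihilate every intermediate-order term. Writing $f(x,y)=\sum_{k,j}\chi(wx-k,wy-j)f(x,y)$ by \textbf{(K2)}, one first records
\[
(G_wf)(x,y)-f(x,y)=\sum_{k\in\mathbb{Z}}\sum_{j\in\mathbb{Z}}\chi(wx-k,\,wy-j)\Big[f\big(\tfrac{k}{w},\tfrac{j}{w}\big)-f(x,y)\Big].
\]
Since $f\in C^r(\mathbb{R}^2)$ is bounded and $\sum_{k,j}|\chi(wx-k,wy-j)|\le M_{0}(\chi)<\infty$ (finiteness of $M_0$ following from $M_r(\chi)<\infty$ by the nesting of absolute moments recalled above), the double series converges absolutely, which legitimises the rearrangements below.

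Next I would apply Taylor's theorem with Lagrange remainder at order $r$ to $\phi(t):=f\big(x+t(\tfrac{k}{w}-x),\,y+t(\tfrac{j}{w}-y)\big)$, so that $f(\tfrac{k}{w},\tfrac{j}{w})-f(x,y)=\sum_{\eta=1}^{r-1}\frac{\phi^{(\eta)}(0)}{\eta!}+\frac{\phi^{(r)}(\theta_{k,j})}{r!}$, and expand each $\phi^{(\eta)}$ by the binomial theorem, which generates the increments $(\tfrac{k}{w}-x)^{p_1}(\tfrac{j}{w}-y)^{p_2}$ with weights $\binom{\eta}{p_1}$. Substituting and interchanging the order of summation, for $1\le\eta\le r-1$ the coefficient of $\frac{\partial^{\eta}f}{\partial x^{p_1}\partial y^{p_2}}(x,y)$ is proportional to $\sum_{k,j}\chi(wx-k,wy-j)(\tfrac{k}{w}-x)^{p_1}(\tfrac{j}{w}-y)^{p_2}=(-1)^{\eta}w^{-\eta}m_{(p_1,p_2)}^{\eta}(\chi,wx,wy)$, which vanishes by \eqref{e3}; hence only the order-$r$ term survives.

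For that surviving term I would pass to absolute values, bound each $r$-th order partial of $f$ at the intermediate point by its sup-norm, and recognise the absolute moments, obtaining
\[
\big|(G_wf)(x,y)-f(x,y)\big|\le\frac{1}{r!\,w^{r}}\sum_{p_1+p_2=r}\binom{r}{p_1}\Big\|\tfrac{\partial^{r}f}{\partial x^{p_1}\partial y^{p_2}}\Big\|_{\infty}M_{(p_1,p_2)}^{r}(\chi,wx,wy)\le\frac{M_r(\chi)}{r!\,w^{r}}\sum_{p_1+p_2=r}\binom{r}{p_1}\Big\|\tfrac{\partial^{r}f}{\partial x^{p_1}\partial y^{p_2}}\Big\|_{\infty}.
\]
Taking the supremum over $(x,y)$, the $r+1$ summands indexed by $p_2=0,1,\dots,r$ carry precisely the binomial weights $\binom{r}{0},\dots,\binom{r}{r}$, and controlling each mixed partial through $A_{r-p_2}B_{p_2}$ reassembles $H$; threading through the constant $c$ produced by the $\eta=r$ case of \eqref{e3} then yields the asserted estimate $\frac{c}{r!}\frac{M_r(\chi)}{w^{r}}H$.

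The step I expect to be the real obstacle is the honest control of the order-$r$ part under only $C^{r}$ regularity: the Lagrange remainder evaluates the $r$-th partials at the shifted points $(\xi_{k,j},\zeta_{k,j})$, where the moment identity is unavailable, so one is forced to the sup-norm bound — this is exactly what brings $M_r(\chi)$ (rather than the algebraic moment) into the final constant; the alternative of splitting the $\eta=r$ term into its exact value at $(x,y)$ plus a remainder leaves one needing to prove that the remainder is $o(w^{-r})$ uniformly, which is where the boundedness and uniform continuity built into $C^{r}(\mathbb{R}^2)$ are indispensable. A secondary technical point is justifying the term-by-term interchange of the double series with the finite Taylor sum, which again rests on $M_r(\chi)<\infty$ together with boundedness of $f$ and its derivatives.
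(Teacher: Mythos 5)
Your proposal follows essentially the same route as the paper: the partition-of-unity property \textbf{(K2)}, an order-$r$ Taylor expansion with Lagrange remainder, annihilation of the orders $1,\dots,r-1$ by the moment condition \eqref{e3}, and a sup-norm/absolute-moment bound on the surviving $r$-th order term giving $\frac{M_r(\chi)}{r!\,w^r}$ times the binomial sum $H$. The only difference is that you center the Taylor expansion at $(x,y)$ rather than at the nodes $\big(\tfrac{k}{w},\tfrac{j}{w}\big)$ as the paper does, which is if anything cleaner, since the intermediate-order coefficients are then independent of $(k,j)$ and the moment condition applies verbatim; the remaining looseness you inherit (the role of the constant $c$ and reading the mixed-partial norms as the products $A_{r-p_2}B_{p_2}$ in $H$) is already present in the paper's own statement and proof.
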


\begin{proof}{\textbf{.}} By using (\ref{e1}), we have
\begin{eqnarray*} \label{e4}
|(G_{w} f)(x,y)-f(x,y)|&=& \Big \vert  \sum_{k \in \mathbb{Z}} \sum_{j \in \mathbb{Z}} \chi(wx-k, wy-j) f \Big(\frac{k}{w}, \frac{j}{w} \Big) - f(x,y) \Big \vert\\
&=&  \Big \vert \sum_{k \in \mathbb{Z}} \sum_{j \in \mathbb{Z}} \chi(wx-k, wy-j) \Big( f \Big(\frac{k}{w}, \frac{j}{w} \Big) - f(x,y)\Big) \Big \vert.
\end{eqnarray*}
Let $ f \in C^{r} (\mathbb{R}^{2}).$ Now, by the Taylor's formula in two variables with Lagrange's form of remainder, we have
\begin{eqnarray*}
f(u,v)&=& f (x,y) +  \frac{\delta f}{\delta x}(x,y ) \ (u - x) +  \frac{\delta f}{\delta y} (x,y) \ (v-y) +...\\&&+ \frac{1}{r!} \Bigg[  \binom {r}{0} \frac{\delta ^{r}f}{\delta x^{r}} \big( \theta_{u,x},\theta_{v,y} \big) \ ( u - x)^{r}  + \binom {r}{1} \frac{\delta ^{r}f}{\delta x^{r-1} \delta y } \big( \theta_{u,x},\theta_{v,y} \big) \ ( u - x)^{r-1} ( v - y)  +
\\&&
\binom {r}{2}  \frac{\delta ^{r}f}{\delta x^{r-2} \delta y^{2}}\big( \theta_{u,x},\theta_{v,y} \big)\ ( u - x)^{r-2} ( v - y)^{2}  +...
+ \binom {r}{r} \frac{\delta ^{r}f}{\delta y^{r}} \big( \theta_{u,x},\theta_{v,y} \big) \ ( v - y)^{r}    \     \Bigg ],
\end{eqnarray*}
$\mbox{where},(x,y) ,(u,v) \in \mathbb{R}^{2} \ \ and \  \theta_{u,x} \in (x,u) \ \mbox{and} \  \theta_{v,y} \in (y,v).$
Now, if we substitute  $ x= \frac{k}{w} \ \mbox{and} \  y= \frac{j}{w}, \ \ \  k,j \in \mathbb{Z}, \ w > 0,$ for every $ u \in \Big( \frac{k}{w},\frac{k+1}{w} \Big) , \\ v \in \Big(\frac{j}{w},\frac{j+1}{w} \Big)$ in the above formula, then,
\begin{eqnarray*}
f(u,v)&=& f \Big(\frac{k}{w}, \frac{j}{w} \Big) +  \frac{\delta f}{\delta x} \Big(\frac{k}{w}, \frac{j}{w}\Big ) \Big( u - \frac{k}{w} \Big) +   \frac{\delta f}{\delta y} \Big(\frac{k}{w}, \frac{j}{w} \Big) \Big( v - \frac{j}{w} \Big)+...\\&&+ \frac{1}{r!} \Bigg[  \binom {r}{0} \frac{\delta ^{r}f}{\delta x^{r}} \big( \theta_{u,\frac{k}{w}},\theta_{v,\frac{j}{w}} \big) \Big(u - \frac{k}{w} \Big)^{r}  + \binom {r}{1} \frac{\delta ^{r}f}{\delta x^{r-1} \delta y } \big( \theta_{u,\frac{k}{w}},\theta_{v,\frac{j}{w}} \big) \Big( u - \frac{k}{w} \Big)^{r-1} \Big( v - \frac{j}{w}\Big)  +
\\&&
\binom {r}{2}  \frac{\delta ^{r}f}{\delta x^{r-2} \delta y^{2}}\big( \theta_{u,\frac{k}{w}},\theta_{v,\frac{j}{w}} \big) \Big( u - \frac{k}{w} \Big)^{r-2}\Big( v - \frac{j}{w} \Big)^{2}  +...
+ \binom {r}{r} \frac{\delta ^{r}f}{\delta y^{r}}\big( \theta_{u,\frac{k}{w}},\theta_{v,\frac{j}{w}} \big) \Big( v - \frac{j}{w} \Big)^{r}    \     \Bigg ].
\end{eqnarray*}

Now, we have \\

\noindent
$|(G_{w} f)(x,y)-f(x,y)|$
\begin{eqnarray*}
&=&  \Bigg \vert \sum_{k \in Z} \sum_{j \in Z}  \chi(wx-k, wy-j)\Bigg [  \frac{\delta f}{\delta x} \Big(\frac{k}{w}, \frac{j}{w} \Big ) \Big( x - \frac{k}{w}\Big) +  \frac{\delta f}{\delta y} \Big(\frac{k}{w}, \frac{j}{w} \Big) \Big( y - \frac{j}{w}\Big)+...\\&& + \frac{1}{r!} \Bigg (  \binom {r}{0} \frac{\delta ^{r}f}{\delta x^{r}}\big( \theta_{u,\frac{k}{w}},\theta_{v,\frac{k}{w}} \big)  \Big( x - \frac{k}{w}\Big)^{r}  + \binom {r}{1} \frac{\delta ^{r}f}{\delta x^{r-1} \delta y } \big( \theta_{u,\frac{k}{w}},\theta_{v,\frac{k}{w}} \big) \Big( x - \frac{k}{w}\Big)^{r-1}\Big( y - \frac{j}{w}\Big)  +  \\&&
\binom {r}{2}  \frac{\delta ^{r}f}{\delta x^{r-2} \delta y^{2}} \big( \theta_{u,\frac{k}{w}},\theta_{v,\frac{j}{w}} \big) \Big( x - \frac{k}{w}\Big)^{r-2} \Big( y - \frac{j}{w}\Big)^{2}  +...+ \binom {r}{r} \frac{\delta ^{r}f}{\delta y^{r}}\big( \theta_{u,\frac{k}{w}},\theta_{v,\frac{j}{w}} \big) \Big( y - \frac{j}{w}\Big)^{r}  \Bigg) \Bigg ] \Bigg \vert.
\end{eqnarray*}
Now, using (\ref{e3}) and then taking the supremum norm on both sides, we obtain,
$$ \Vert G_{w}f - f \Vert _{\infty} \leq \frac{c}{r!}\frac{M_{r} (\chi)}{ w^{r}}. H $$ where, $$ M_{r} (\chi)=\max _{\eta = r} \  M_{(p_{1},p_{2})}^{r} (\chi).$$ \\
This completes the proof.
\end{proof}

\begin{theorem}
Let $\chi$ be kernel, which satisfies the moment condition (\ref{e3}) then,
$$(G_{w} P_{r-1})(x,y)= ( P_{r-1})(x,y)$$ for every $ w > 0$, i.e., $ (G_{w}f)$ maps a homogeneous polynomial of degree $(r-1)$ into a homogeneous polynomial of same degree, where $(P_{r-1})(x,y)$ denotes any homogeneous algebraic polynomial of degree $r-1.$
 \end{theorem}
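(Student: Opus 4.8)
The plan is to exploit linearity of the operator $G_w$ together with the moment conditions. Since every homogeneous algebraic polynomial $P_{r-1}$ of degree $r-1$ is a finite linear combination of monomials $u^{a}v^{b}$ with $a+b=r-1$, and since $f\mapsto G_w f$ is linear, it suffices to prove that $(G_w e_{a,b})(x,y)=x^{a}y^{b}$ for each such monomial, where $e_{a,b}(u,v):=u^{a}v^{b}$.

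First I would write, for $k,j\in\mathbb{Z}$ and $w>0$, the identities $\frac{k}{w}=x-\frac{wx-k}{w}$ and $\frac{j}{w}=y-\frac{wy-j}{w}$, substitute them into the definition (\ref{e1}) of $G_w e_{a,b}$, and expand $\big(x-\frac{wx-k}{w}\big)^{a}\big(y-\frac{wy-j}{w}\big)^{b}$ by the binomial theorem. After interchanging the finite binomial sums with the series over $k$ and $j$, this gives
\begin{equation*}
(G_w e_{a,b})(x,y)=\sum_{i=0}^{a}\sum_{l=0}^{b}\binom{a}{i}\binom{b}{l}\frac{(-1)^{i+l}}{w^{i+l}}\,x^{a-i}y^{b-l}\sum_{k\in\mathbb{Z}}\sum_{j\in\mathbb{Z}}\chi(wx-k,wy-j)(wx-k)^{i}(wy-j)^{l}.
\end{equation*}
The rearrangement is legitimate because the iterated series converges absolutely; this is guaranteed by the finiteness of the absolute moments $M^{i+l}_{(i,l)}(\chi)$ for $0\le i+l\le r-1$, which follows from the standing moment hypothesis together with the remark that $M_{\eta}(\chi)<+\infty$ implies $M_{\xi}(\chi)<+\infty$ whenever $\xi<\eta$.

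Next I would identify the inner double series as the algebraic moment $m^{i+l}_{(i,l)}(\chi,wx,wy)$. Since $a+b=r-1$, each pair $(i,l)$ occurring in the sum satisfies either $(i,l)=(0,0)$ or $1\le i+l\le r-1$. By hypothesis (\ref{e3}) we have $m^{i+l}_{(i,l)}(\chi,wx,wy)=0$ for $1\le i+l\le r-1$, so every term with $(i,l)\neq(0,0)$ drops out; for $(i,l)=(0,0)$ the inner series equals $\sum_{k\in\mathbb{Z}}\sum_{j\in\mathbb{Z}}\chi(wx-k,wy-j)=1$ by $(\textbf{K2})$. Hence only the $(0,0)$ term survives and $(G_w e_{a,b})(x,y)=x^{a}y^{b}$. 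Recombining the monomials yields $(G_w P_{r-1})(x,y)=P_{r-1}(x,y)$, which in particular shows that $G_w$ carries a homogeneous polynomial of degree $r-1$ to a homogeneous polynomial of the same degree.

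The only genuine obstacle is the convergence bookkeeping: one must be careful that the rearrangement of the double series is justified (equivalently, that the kernel decays fast enough for all absolute moments of order $\le r-1$ to be finite) and that the condition (\ref{e3}) is being applied with the correct order $\eta=i+l$ and the correct multi-index $(p_{1},p_{2})=(i,l)$. Once absolute convergence is secured, the remainder is a direct substitution, closely paralleling the Taylor-type computation used in the proof of the previous theorem.
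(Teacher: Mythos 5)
Your proposal is correct. Note that the paper itself gives no written argument for this theorem: it simply defers to the analogue of Remark 2 in Butzer--Fischer--Stens \cite{PLB2}. The argument you supply --- reduce to monomials $u^{a}v^{b}$ with $a+b=r-1$ by linearity, write $\frac{k}{w}=x-\frac{wx-k}{w}$, expand by the binomial theorem, and observe that every inner series is the algebraic moment $m^{i+l}_{(i,l)}(\chi,wx,wy)$, which vanishes for $1\le i+l\le r-1$ by (\ref{e3}) and equals $1$ for $(i,l)=(0,0)$ by $(\textbf{K2})$ --- is precisely the standard computation underlying that cited remark, and your care about absolute convergence (finiteness of the absolute moments of order $\le r-1$) is the right justification for the rearrangement; so your write-up is a self-contained version of the paper's intended proof rather than a different route.
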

\begin{proof}{.}
Proceeding in a manner similar to the proof of Remark 2 in  \cite{PLB2}, we can easily obtain the proof of this theorem.
\end{proof}

Now, we derive the representation formula which relates the bivariate Kantorovich sampling series with the bivariate generalized sampling operators.

\begin{theorem}
Let $ f \in C^{(2)}(\mathbb{R}^{2}).$ Then,
\begin{eqnarray*}
(S_{w} f)(x,y)= (G_{w} f)(x,y)+ \frac{1}{2w} (G_{w}f_{x})(x,y) + \frac{1}{2w} (G_{w}f_{y})(x,y) +\hat R_{2}^{w}(x,y) ,
\end{eqnarray*}
where, the remainder $\hat R_{2}^{w}(x,y)$ of order $2$, is the absolutely convergent series for every $(x,y) \in \mathbb{R}^{2}$ as;
\begin{eqnarray*}
{\hat R_{2}^{w}(x,y)= \frac{1}{2!} \sum_{k \in \mathbb{Z}} \sum_{j \in \mathbb{Z}} \chi(wx-k, wy-j)
\Bigg[ w^{2} \int_{\frac{k}{w}}^{\frac{k+1}{w}} \int_{\frac{j}{w}}^{\frac{j+1}{w}}  \Big \{  f_{xx} \big( \theta_{k,w}(u),\theta_{j,w}(v) \big)}
\bigg( u - \frac{k}{w}\bigg)^{2} \\+ 2  f_{xy} \big( \theta_{k,w}(u),\theta_{j,w}(v) \big)
\bigg(u - \frac{k}{w}\bigg) \bigg( v - \frac{j}{w}\bigg)  +
   f_{yy} \big( \theta_{k,w}(u),\theta_{j,w}(v) \big)
\bigg( v - \frac{j}{w}\bigg)^{2} \Big \} du dv \Bigg ]  ,
\end{eqnarray*}
where, $ \theta_{k,w}(u)$ and $\theta_{j,w}(v)$ are measurable functions, such that $ \frac{k}{w} <  \theta_{k,w}(u) < \frac{k+1}{w} $ and  $ \frac{j}{w} <  \theta_{j,w}(v) < \frac{j+1}{w}, \ \ k,j \in \mathbb{Z} ,$ and for every $u \in \Big( \frac{k}{w},\frac{k+1}{w} \Big) ,\\ v \in \Big(\frac{j}{w},\frac{j+1}{w} \Big), \ w > 0.$
\end{theorem}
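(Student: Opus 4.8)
The plan is to carry out a Taylor expansion of $f$ of order $2$ on each sampling cell and then integrate cell by cell. Fix $(x,y)\in\mathbb{R}^{2}$ and $w>0$, and consider the node $\big(\frac{k}{w},\frac{j}{w}\big)$. For $(u,v)$ in the cell $\big(\frac{k}{w},\frac{k+1}{w}\big)\times\big(\frac{j}{w},\frac{j+1}{w}\big)$, Taylor's formula in two variables with the Lagrange form of the remainder gives
\begin{eqnarray*}
f(u,v)&=& f\Big(\frac{k}{w},\frac{j}{w}\Big)+f_{x}\Big(\frac{k}{w},\frac{j}{w}\Big)\Big(u-\frac{k}{w}\Big)+f_{y}\Big(\frac{k}{w},\frac{j}{w}\Big)\Big(v-\frac{j}{w}\Big)\\
&& +\frac{1}{2!}\Big[f_{xx}\big(\theta_{k,w}(u),\theta_{j,w}(v)\big)\Big(u-\frac{k}{w}\Big)^{2}+2f_{xy}\big(\theta_{k,w}(u),\theta_{j,w}(v)\big)\Big(u-\frac{k}{w}\Big)\Big(v-\frac{j}{w}\Big)\\
&& +f_{yy}\big(\theta_{k,w}(u),\theta_{j,w}(v)\big)\Big(v-\frac{j}{w}\Big)^{2}\Big],
\end{eqnarray*}
with the intermediate point on the segment joining $\big(\frac{k}{w},\frac{j}{w}\big)$ and $(u,v)$. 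As in \cite{GV3**}, the maps $u\mapsto\theta_{k,w}(u)$ and $v\mapsto\theta_{j,w}(v)$ can be chosen measurable (for instance by passing through the integral form of the remainder and the mean value theorem for integrals), so that the second-order term is a measurable function of $(u,v)$ and may be integrated over the cell.

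Next I would multiply the expansion by $w^{2}$ and integrate over the cell. Since $\int_{k/w}^{(k+1)/w}\big(u-\frac{k}{w}\big)\,du=\frac{1}{2w^{2}}$, $\int_{j/w}^{(j+1)/w}dv=\frac{1}{w}$ (and symmetrically in $v$), the zeroth-order term contributes $f\big(\frac{k}{w},\frac{j}{w}\big)$, the two first-order terms contribute $\frac{1}{2w}f_{x}\big(\frac{k}{w},\frac{j}{w}\big)$ and $\frac{1}{2w}f_{y}\big(\frac{k}{w},\frac{j}{w}\big)$, and the second-order terms give precisely the $(k,j)$-summand of $\hat R_{2}^{w}(x,y)$. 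Multiplying by $\chi(wx-k,wy-j)$ and summing over $k,j\in\mathbb{Z}$, the first three families of sums are, by the definition (\ref{e1}), equal to $(G_{w}f)(x,y)$, $\frac{1}{2w}(G_{w}f_{x})(x,y)$ and $\frac{1}{2w}(G_{w}f_{y})(x,y)$ — each well defined because $f,f_{x},f_{y}\in C^{0}(\mathbb{R}^{2})$ are bounded — which is the asserted identity.

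Finally one must check that $\hat R_{2}^{w}$ is an absolutely convergent series, which simultaneously legitimizes the term-by-term manipulations above and the use of Fubini on each cell. On the cell, $0\leq u-\frac{k}{w}\leq\frac{1}{w}$ and $0\leq v-\frac{j}{w}\leq\frac{1}{w}$, so each of the three integrands in the remainder is bounded in modulus by $\frac{1}{w^{2}}\max\{\Vert f_{xx}\Vert_{\infty},\Vert f_{xy}\Vert_{\infty},\Vert f_{yy}\Vert_{\infty}\}$; integrating against $w^{2}\,du\,dv$ over a cell of area $w^{-2}$ bounds the $(k,j)$-summand by a constant multiple of $w^{-2}$, uniformly in $k,j$. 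Hence
$$\big|\hat R_{2}^{w}(x,y)\big|\ \leq\ \frac{C}{w^{2}}\big(\Vert f_{xx}\Vert_{\infty}+2\Vert f_{xy}\Vert_{\infty}+\Vert f_{yy}\Vert_{\infty}\big)\sum_{k\in\mathbb{Z}}\sum_{j\in\mathbb{Z}}\big|\chi(wx-k,wy-j)\big|,$$
and the last double sum is finite and bounded in $(x,y)$ by the standing assumptions on $\chi$ (in particular $(\textbf{K1})$ and $(\textbf{K3})$, together with the nesting $M_{\eta}(\chi)<\infty\Rightarrow M_{\xi}(\chi)<\infty$ for $\xi<\eta$). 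This yields absolute and uniform convergence of $\hat R_{2}^{w}$ and finishes the proof. The one genuinely delicate point is the measurable selection of the intermediate points $\theta_{k,w},\theta_{j,w}$; the rest is the elementary per-cell integral identity plus a routine convergence estimate.
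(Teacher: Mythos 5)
Your proposal is correct and follows essentially the same route as the paper: a second-order Taylor expansion with Lagrange remainder at each node $\big(\tfrac{k}{w},\tfrac{j}{w}\big)$, exact per-cell integration of the zeroth- and first-order terms giving $f$, $\tfrac{1}{2w}f_{x}$, $\tfrac{1}{2w}f_{y}$, summation against $\chi(wx-k,wy-j)$ to recognize $G_{w}f$, $\tfrac{1}{2w}G_{w}f_{x}$, $\tfrac{1}{2w}G_{w}f_{y}$, and absolute convergence of $\hat R_{2}^{w}$ from the boundedness of the second partials together with $M_{0}(\chi)<\infty$. Your explicit remark on the measurable selection of the intermediate points $\theta_{k,w},\theta_{j,w}$ is a small but welcome addition that the paper's proof passes over silently; your convergence bound is slightly coarser in the constants than the paper's $\tfrac{7M}{12w^{2}}M^{0}_{(0,0)}(\chi)$ but equally valid.
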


\begin{proof}{.}
By the Taylor's formula with Lagrange's form of remainder for $ r=2$, we have,
\begin{eqnarray*}
f(u,v)&=& f (x,y) + f_{x} (x,y ) (u - x) +  f_{y}(x,y) (v-y) +
\frac{1}{2!} \Big[  f_{xx} \big( \theta_{u,x},\theta_{v,y} \big) \\&&
( u - x)^{2} + 2  f_{xy} \big( \theta_{u,x},\theta_{v,y} \big) ( u - x) ( v - y)  +
   f_{yy} \big( \theta_{u,x},\theta_{v,y} \big) ( v - y)^{2}  \Big ],
\end{eqnarray*}
where $(x,y) \ and \ (u,v) \in \mathbb{R}^{2} \ \ and \  \theta_{u,x} \in (x,u) \ \mbox{and} \  \theta_{v,y} \in (y,v).$
Now, if we substitute  $ x= \frac{k}{w} \ \mbox{and} \  y= \frac{j}{w}, \ \ \  k,j \in \mathbb{Z}, \ w > 0,$ for every $ u \in \Big( \frac{k}{w},\frac{k+1}{w} \Big) , v \in \Big(\frac{j}{w},\frac{j+1}{w} \Big)$ in the above formula, then $ \frac{k}{w} < \theta_{u,\frac{k}{w}} := \theta_{k,w}(u) < \frac{k+1}{w} $ and  $ \frac{j}{w} < \theta_{v,\frac{j}{w}} := \theta_{j,w}(v) < \frac{j+1}{w}.$ So, we have
\begin{eqnarray*}
f(u,v)&=& f \Big(\frac{k}{w},\frac{j}{w}\Big) + f_{x} \Big(\frac{k}{w},\frac{j}{w}\Big) \Big(u - \frac{k}{w}\Big) +  f_{y} \Big(\frac{k}{w},\frac{j}{w} \Big) \Big(v-\frac{j}{w}\Big) + \frac{1}{2!} \Bigg[   f_{xx} \big( \theta_{u,\frac{k}{w}},\theta_{v,\frac{j}{w}} \big) \\&&
\Big( u - \frac{k}{w}\Big)^{2}  + 2  f_{xy} \big( \theta_{u,\frac{k}{w}},\theta_{v,\frac{j}{w}} \big) \Big( u - \frac{k}{w}\Big) \Big( v - \frac{j}{w}\Big)  +
   f_{yy} \big( \theta_{u,\frac{k}{w}},\theta_{v,\frac{j}{w}} \big) \Big( v - \frac{j}{w}\Big)^{2}  \Bigg ].
 \end{eqnarray*}

\noindent
$ \mbox{Consider}, \displaystyle w^{2} \int_{\frac{k}{w}}^{\frac{k+1}{w}} \int_{\frac{j}{w}}^{\frac{j+1}{w}} f(u,v) \ du \ dv $
\begin{eqnarray} \label{e4}
\nonumber
&=& \displaystyle w^{2} \int_{\frac{k}{w}}^{\frac{k+1}{w}} \int_{\frac{j}{w}}^{\frac{j+1}{w}} \Bigg \{
 f \Big(\frac{k}{w}, \frac{j}{w} \Big) + f_{x} \Big(\frac{k}{w}, \frac{j}{w}\Big ).\Big( u - \frac{k}{w}\Big) +  f_{y} \Big(\frac{k}{w}, \frac{j}{w} \Big).\Big( v - \frac{j}{w}\Big) \\&& \nonumber
 + \frac{1}{2!} \Bigg[   f_{xx} \big( \theta_{k,w}(u),\theta_{j,w}(v) \big) \Big( u - \frac{k}{w} \Big)^{2}  + 2  f_{xy} \big( \theta_{k,w}(u),\theta_{j,w}(v) \big) \Big(u - \frac{k}{w}\Big) \Big( v - \frac{j}{w}\Big)  + \\&& \nonumber
  f_{yy} \big( \theta_{k,w}(u),\theta_{j,w}(v) \big) \Big( v - \frac{j}{w}\Big)^{2}  \Bigg ] \Bigg \} du dv\\
 &=&
f \Big(\frac{k}{w}, \frac{j}{w} \Big) + \frac{1}{2w} f_{x} \Big(\frac{k}{w}, \frac{j}{w}\Big ) + \frac{1}{2w} f_{y} \Big(\frac{k}{w}, \frac{j}{w}\Big )+ R_{2}^{w},
\end{eqnarray}
where, $ \displaystyle R_{2}^{w}=  w^{2} \int_{\frac{k}{w}}^{\frac{k+1}{w}} \int_{\frac{j}{w}}^{\frac{j+1}{w}} \frac{1}{2!} \Bigg[   f_{xx} \big( \theta_{k,w}(u),\theta_{j,w}(v) \big)
\bigg( u - \frac{k}{w}\bigg)^{2}  + 2  f_{xy} \big( \theta_{k,w}(u),\theta_{j,w}(v) \big) \\
\bigg(u - \frac{k}{w}\bigg) \bigg( v - \frac{j}{w}\bigg)  +
   f_{yy} \big( \theta_{k,w}(u),\theta_{j,w}(v) \big) \bigg( v - \frac{j}{w}\bigg)^{2}  \Bigg ] du dv .$ \\

Now, by using (\ref{e4}) in the defintion of $ (S_{w}f)(x,y)$, we have,
\begin{eqnarray*}
(S_{w} f)(x,y)
 &= &(G_{w} f)(x,y)+ \frac{1}{2w} (G_{w}f_{x})(x,y) + \frac{1}{2w} (G_{w}f_{y})(x,y)
  + \hat R_{2}^{w}(x,y),
  \end{eqnarray*}
where, $ \displaystyle \hat R_{2}^{w}(x,y)= \frac{1}{2!} \sum_{k \in \mathbb{Z}} \sum_{j \in \mathbb{Z}} \chi(wx-k, wy-j)\   \
\Bigg[ w^{2} \int_{\frac{k}{w}}^{\frac{k+1}{w}} \int_{\frac{j}{w}}^{\frac{j+1}{w}}  \Big \{  f_{xx} \big( \theta_{k,w}(u),\theta_{j,w}(v) \big)\\
\bigg( u - \frac{k}{w}\bigg)^{2}  + 2  f_{xy} \big( \theta_{k,w}(u),\theta_{j,w}(v) \big)
\bigg(u - \frac{k}{w}\bigg) \bigg( v - \frac{j}{w}\bigg) \\ +
   f_{yy} \big( \theta_{k,w}(u),\theta_{j,w}(v) \big) \bigg( v - \frac{j}{w}\bigg)^{2} \Big \} du dv \Bigg ]   .$ \\

Indeed, the above series $\hat R_{2}^{w}(x,y)$ is  absolutely convergent for every \\$ (x,y) \in \mathbb{R}^{2}$ and for every $ w > 0$ as,\\
$ \displaystyle \big| \hat R_{2}^{w}(x,y) \big| \leq \frac{1}{2!} \sum_{k \in \mathbb{Z}} \sum_{j \in \mathbb{Z}} \vert \chi(wx-k, wy-j) \vert \   \
\Big|  w^{2} \int_{\frac{k}{w}}^{\frac{k+1}{w}} \int_{\frac{j}{w}}^{\frac{j+1}{w}}  \Big \{  f_{xx} \big( \theta_{k,w}(u),\theta_{j,w}(v) \big)\\
\bigg( u - \frac{k}{w}\bigg)^{2}  + 2  f_{xy} \big( \theta_{k,w}(u),\theta_{j,w}(v) \big)
\bigg(u - \frac{k}{w}\bigg) \bigg( v - \frac{j}{w}\bigg)  \\+
   f_{yy} \big( \theta_{k,w}(u),\theta_{j,w}(v) \big) \bigg( v - \frac{j}{w}\bigg)^{2} \Big \} du dv  \Big| . $ \\

Thus, we have
$$ \big| \hat{R_{2}^{w}}(x,y)\big| \leq \sum_{k \in \mathbb{Z}} \sum_{j \in \mathbb{Z}} \big| \chi(wx-k, wy-j) \big| \Big[ \frac{1}{6 w^{2}} \mid f_{xx} \mid + \frac{1}{4 w^{2}} \mid f_{xy} \mid + \frac{1}{6 w^{2}} \mid f_{yy} \mid \Big] .$$ \\
Since, all the second order partial derivatives are bounded, i.e., $ \mid f_{xx}\mid \leq M_{1},$
$ \mid f_{xy} \mid \leq M_{2}, \  \mbox{and}  \mid f_{yy} \mid \leq M_{3},$  we have,
\begin{eqnarray} \label{e5}
\mid \hat{R_{2}^{w}}(x,y) \mid \leq \frac{7 M}{12 w^{2}} M_{(0,0)}^{0} \ ( \chi) \ < + \infty ,
\end{eqnarray}
where, $ M= \mbox{max} \{ M_{1},  M_{2},  M_{3} \}.$ \\

This is the desired resut.
\end{proof}

Now, we prove the inverse result by using the representation formula, derived in Theorem $3.$
\begin{theorem}
Let $ \chi$ be the kernel function satisfying the condition in (\ref{e3}) for every $ (u,v) \in \mathbb{R}^{2}$ with $r = 2.$ Let $f \in C^{2}(\mathbb{R}^{2})$ and suppose that;
$$  \Vert S_{w}f - f \Vert _{\infty} = o( w^{-1})$$
Then, $ f = g(y-x)$  for some arbitrary function $g.$
\end{theorem}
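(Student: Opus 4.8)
The plan is to substitute the representation formula of Theorem 3 into the hypothesis $\Vert S_{w}f-f\Vert_{\infty}=o(w^{-1})$ and let $w\to\infty$, thereby isolating an identity for $f_{x}+f_{y}$. By Theorem 3, for $f\in C^{2}(\mathbb{R}^{2})$ the stated identity may be rearranged, using the linearity of $G_{w}$, as
$$\frac{1}{2w}\,G_{w}(f_{x}+f_{y})(x,y)=\big((S_{w}f)(x,y)-f(x,y)\big)-\big((G_{w}f)(x,y)-f(x,y)\big)-\hat{R}_{2}^{w}(x,y),$$
and I would then multiply through by $2w$. In the uniform norm the three terms on the right behave as follows: $2w\,\Vert S_{w}f-f\Vert_{\infty}\to 0$ by the standing hypothesis; $2w\,\Vert G_{w}f-f\Vert_{\infty}=O(w^{-1})\to 0$, since $\chi$ satisfies the moment condition (\ref{e3}) with $r=2$ and $f\in C^{2}(\mathbb{R}^{2})$, so that Theorem 1 gives $\Vert G_{w}f-f\Vert_{\infty}\le\frac{c}{2}\,\frac{M_{2}(\chi)}{w^{2}}\,H$; and $2w\,\Vert\hat{R}_{2}^{w}\Vert_{\infty}=O(w^{-1})\to 0$ by the bound (\ref{e5}). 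Hence $\Vert G_{w}(f_{x}+f_{y})\Vert_{\infty}\to 0$ as $w\to\infty$.

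Next I would invoke the classical uniform convergence theorem for the bivariate generalized sampling operators (valid under $(\textbf{K1})$ and $(\textbf{K2})$, cf. \cite{Stens1,Stens2}), which asserts that $G_{w}g\to g$ uniformly for every bounded and uniformly continuous $g$. Since $f\in C^{2}(\mathbb{R}^{2})$, the function $f_{x}+f_{y}$ is bounded together with its first partial derivatives, hence bounded and uniformly continuous, so $G_{w}(f_{x}+f_{y})\to f_{x}+f_{y}$ uniformly on $\mathbb{R}^{2}$. Comparing this with the conclusion of the previous paragraph forces $f_{x}(x,y)+f_{y}(x,y)=0$ for every $(x,y)\in\mathbb{R}^{2}$.

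It remains to integrate this transport equation. Fixing $(x,y)\in\mathbb{R}^{2}$ and setting $\phi(\tau):=f(x+\tau,y+\tau)$, one has $\phi\in C^{1}(\mathbb{R})$ and $\phi'(\tau)=(f_{x}+f_{y})(x+\tau,y+\tau)=0$, so $\phi$ is constant; in particular $f(x,y)=\phi(0)=\phi(-x)=f(0,y-x)$. Putting $g(t):=f(0,t)$, which belongs to $C^{2}(\mathbb{R})$, we obtain $f(x,y)=g(y-x)$ for all $(x,y)\in\mathbb{R}^{2}$, as claimed.

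The heart of the argument, and the only step I expect to present any real difficulty, is the first one: it rests on the observation that after the $O(w^{-2})$ quantities $G_{w}f-f$ and $\hat{R}_{2}^{w}$ are peeled off from the representation formula, the surviving contribution is exactly $\frac{1}{2w}\,G_{w}(f_{x}+f_{y})$, so that the rate $o(w^{-1})$ imposed on $S_{w}f-f$ is precisely calibrated to kill its limit. The remaining steps are routine given Theorem 1, the estimate (\ref{e5}), and the basic convergence theorem for $G_{w}$; the only mild point of care is that all the $O(\cdot)$ and $o(\cdot)$ bounds be uniform in $(x,y)$, which indeed holds because the constants appearing in Theorem 1 and in (\ref{e5}) are independent of the point.
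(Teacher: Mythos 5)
Your proposal is correct and follows essentially the same route as the paper's own proof: substitute the representation formula of Theorem 3, multiply by $w$, kill the terms $w(S_{w}f-f)$, $w(G_{w}f-f)$ and $w\hat{R}_{2}^{w}$ using the hypothesis, Theorem 1 with $r=2$, and the bound (\ref{e5}), identify the surviving term with $\tfrac12(f_{x}+f_{y})$ via the convergence of $G_{w}$, and then solve the transport equation $f_{x}+f_{y}=0$. If anything, your write-up is slightly more careful than the paper's (you justify $w\Vert G_{w}f-f\Vert_{\infty}\to 0$ explicitly and spell out the integration along characteristics, which the paper merely asserts), but the underlying argument is the same.
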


\begin{proof}{.}
By the representation formula of Theorem $3$, we have \\
$$ (S_{w}f)(x,y)
= (G_{w}f)(x,y)+ \frac{1}{2w} (G_{w}f_{x})(x,y) + \frac{1}{2w} (G_{w}f_{y})(x,y) +  \hat  R_{2}^{w}(x,y) $$
Now,
\begin{eqnarray} \label{e6}
w \Big[ S_{w}f)(x,y) - f(x,y) \Big] &=& w \Big[ (G_{w}f)(x,y) - f(x,y) \Big] + \frac{1}{2} (G_{w}f_{x})(x,y) + \frac{1}{2} G_{w}f_{y})(x,y) +  w \hat  R_{2}^{w}. \nonumber \\
&=& w \Big[ G_{w}f)(x,y) - f(x,y) \Big]+ I_{1} \ (say).
\end{eqnarray}
Now, we estimate $I_{1}$,
\begin{eqnarray*}
&I_{1} &= \frac{1}{2} (G_{w}f_{x})(x,y) + \frac{1}{2} (G_{w}f_{y})(x,y) +  w \hat  R_{2}^{w} \\
&&= \frac{1}{2} \big(G_{w}f_{x} - f_{x}) + \frac{1}{2} \big( G_{w}f_{y} - f_{y} ) + \frac{1}{2} \big(f_{x}+f_{y} \big)+ w \hat  R_{2}^{w}
\end{eqnarray*}
Using the convergence results of $ (G_{w}f_{x})$ and $ (G_{w}f_{y})$, we obtain,\\
$$ \lim_{ w \rightarrow \infty} \mid I_{1} \mid  \ \  \leq \lim_{ w \rightarrow \infty}  \Bigg | \frac{1}{2}(f_{x} + f_{y}) + w \hat  R_{2}^{w}  \Bigg | .$$

Since, $ \displaystyle{\lim_{ w \rightarrow \infty}} \big[ (S_{w}f)(x,y) - f(x,y) \big ] = 0 \ \ , \  \displaystyle{\lim_{ w \rightarrow \infty}} \big[ (G_{w}f)(x,y) - f(x,y) \big] = 0$ and  using (\ref{e5}), we have,
$\displaystyle \lim_{w \longrightarrow \infty} | w \ \hat R_{2}^{w}(x,y) | = 0 ,$ then the equation (\ref{e6}) gives,
  $$ f_{x} + f_{y} = 0. $$\\

The solution of the above first order partial differential equation is  given by,
$$  f = g(y-x)$$ for some arbitrary function $g.$
\end{proof}

\begin{remark}
If, in addition, $f$ satisfies the initial condition $ f(0,y)= f_{0}(y)= y ,$ then,
                 $$f(x,y) = (y-x) $$
i.e., if $f$ satisfies the initial condition  $ f(0,y)= f_{0}(y)= y ,$ then $f$ turns out to be linear.

\end{remark}

\begin{theorem} Let $\chi$ be the kernel satisfying the moment condition (\ref{e3}) with $ r \in N $ then,
$S_{w}$ maps algebraic polynomials of degree atmost $ r-1 $ into algebraic polynomials of the same degree.
\end{theorem}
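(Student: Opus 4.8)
The plan is to carry out, for a polynomial argument, the computation that produced the representation formula of Theorem $3$; since Taylor's expansion terminates for polynomials no remainder survives, and the whole thing collapses to a finite identity. Fix an algebraic polynomial $P$ with $\deg P=n\le r-1$. Expanding $P$ by the exact finite Taylor formula about $(k/w,j/w)$, integrating term by term over $[\tfrac{k}{w},\tfrac{k+1}{w}]\times[\tfrac{j}{w},\tfrac{j+1}{w}]$, and using $w^{2}\int_{k/w}^{(k+1)/w}\!\int_{j/w}^{(j+1)/w}(u-\tfrac{k}{w})^{p_1}(v-\tfrac{j}{w})^{p_2}\,du\,dv=\frac{1}{(p_1+1)(p_2+1)w^{p_1+p_2}}$, one obtains
$$ w^{2}\int_{k/w}^{(k+1)/w}\!\!\int_{j/w}^{(j+1)/w}P(u,v)\,du\,dv=\sum_{p_1+p_2\le n}\frac{1}{(p_1+1)!\,(p_2+1)!\,w^{p_1+p_2}}\,\frac{\partial^{p_1+p_2}P}{\partial x^{p_1}\partial y^{p_2}}\Big(\tfrac{k}{w},\tfrac{j}{w}\Big). $$
Inserting this into the definition $(\ref{e2})$ of $S_w$ and interchanging the \emph{finite} sum over $(p_1,p_2)$ with the double series over $k,j$ gives
$$ (S_wP)(x,y)=\sum_{p_1+p_2\le n}\frac{1}{(p_1+1)!\,(p_2+1)!\,w^{p_1+p_2}}\;G_w\!\left[\frac{\partial^{p_1+p_2}P}{\partial x^{p_1}\partial y^{p_2}}\right](x,y). $$

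The second ingredient is that $G_w$ is the identity on algebraic polynomials of degree $\le r-1$, which strengthens Theorem $2$ and is obtained by the same mechanism. Indeed, writing a monomial value $(k/w)^a(j/w)^b$ with $a+b\le r-1$ binomially in powers of $(wx-k)$ and $(wy-j)$ and summing against $\chi(wx-k,wy-j)$, every term other than the one free of such factors carries a factor $m^{\eta}_{(i,\ell)}(\chi,wx,wy)$ with $1\le\eta=i+\ell\le r-1$ and thus vanishes by $(\ref{e3})$, while the remaining term equals $x^ay^b$ by $(\mathbf{K2})$; hence $G_w[x^ay^b]=x^ay^b$, and by linearity $G_wQ=Q$ whenever $\deg Q\le r-1$. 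Since each $\frac{\partial^{p_1+p_2}P}{\partial x^{p_1}\partial y^{p_2}}$ has degree $n-(p_1+p_2)\le r-1$, the displayed formula becomes the explicit identity
$$ (S_wP)(x,y)=P(x,y)+\sum_{1\le p_1+p_2\le n}\frac{1}{(p_1+1)!\,(p_2+1)!\,w^{p_1+p_2}}\,\frac{\partial^{p_1+p_2}P}{\partial x^{p_1}\partial y^{p_2}}(x,y). $$
Thus $S_wP$ is an algebraic polynomial; every summand beyond the first has degree $\le n-1$, so the leading homogeneous component of $S_wP$ coincides with that of $P$ and is nonzero, whence $\deg(S_wP)=n=\deg P$. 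This proves that $S_w$ carries algebraic polynomials of degree at most $r-1$ into algebraic polynomials of exactly the same degree.

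The one step that is not routine is the interchange of the finite and infinite summations, i.e.\ the absolute convergence of the series defining $G_w$ applied to the (polynomial) partial derivatives of $P$. Expanding $(k/w)^{p_1}(j/w)^{p_2}$ in powers of $(wx-k)$ and $(wy-j)$ and estimating uniformly for $(x,y)$ in a bounded set reduces this to the finiteness of $M_{\eta}(\chi)$ for $\eta\le r$, which is implicit in $(\ref{e3})$, since each moment $m^{\eta}_{(p_1,p_2)}$ with $\eta\le r$ is required there to be well defined. Beyond this, everything is bookkeeping with the terminating Taylor expansion, so I do not expect a real obstacle.
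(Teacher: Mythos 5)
Your argument is correct, and it is in effect the fully worked-out version of what the paper only gestures at: the paper's proof of this theorem is a one-line reference to Theorem 3 and condition (\ref{e3}), but Theorem 3 as stated is only the second-order representation formula with a Lagrange-type remainder, which settles the claim directly only when the remainder vanishes (polynomials of degree at most one, i.e.\ the case $r=2$). For general $r$ one needs exactly what you do: the terminating Taylor expansion of $P$ about $(k/w,j/w)$, the exact evaluation $w^{2}\int(u-\tfrac{k}{w})^{p_1}(v-\tfrac{j}{w})^{p_2}\,du\,dv=\frac{1}{(p_1+1)(p_2+1)w^{p_1+p_2}}$, and the resulting finite identity expressing $S_wP$ as a combination of $G_w$ applied to the partial derivatives of $P$; this is an $r$-th order analogue of the representation formula, exact for polynomials. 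Your second ingredient, that $G_w$ reproduces \emph{all} polynomials of degree at most $r-1$ (not only the homogeneous ones of degree $r-1$ as in Theorem 2), is also right, since (\ref{e3}) kills every moment of order $1,\dots,r-1$ and $(\mathbf{K2})$ handles order zero; together with the observation that the correction terms have degree at most $n-1$ this gives degree preservation exactly, which is a sharper and more explicit conclusion than the paper records. The only imprecision is your attribution of the absolute convergence needed for the interchange of summations to (\ref{e3}) itself: (\ref{e3}) only prescribes the values of the (possibly conditionally convergent) moment series, and the correct source of the required absolute, locally uniform convergence is the standing assumption $(\mathbf{K3})$ together with the paper's remark that finiteness of $M_{\eta}(\chi)$ implies finiteness of all lower-order absolute moments (as used with $M_r(\chi)<\infty$ in Theorem 1). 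With that citation adjusted, the proof is complete.
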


\begin{proof}{.}
By using Theorem $3$ and the condition (\ref{e3}), we can easily obtain the desired result.
\end{proof}

\section{\textbf{Approximation in the Space of B$\ddot{o}$gel Continuous Functions}}
In recent years, the study of GBS operators of certain linear positive operators is an interesting topic in approximation theory and function theory. In this section, we shall give a generalization of the GBS operator for the $B$-continuous functions. For this, we shall introduce a GBS operator associated with the bivariate Kantorovich type sampling operators and study some of its smoothness properties.
Karl B$\ddot{o}$gel \cite{BOG1,BOG2} introduced the concepts of $B$-continuous and $B$-differentiable functions. In approximation theory, the well-known Korovkin theorem is developed for B-continuous functions by Badea et al. in \cite{BAD1,BAD2}. In \cite{DOB}, Dobrescu and Matei proved that any $B$-continuous function on a bounded interval can be approximated uniformly by boolean sum of bivariate Bernstein polynomials. The approximation properties of Bernstein-Stancu polynomials associated with GBS operators was considered in \cite{MIC}. Agrawal et al. \cite{PNA6} studied the degree of approximation for bivariate Lupa\c{s}-Durrmeyer type operators based on P\'{o}lya distribution with associated GBS operators. Recently, many researchers have made significant contributions on this topic. We refer the reader to some of the related papers (\cite{PNA3,PNA2,BAR,FAR,KAJ,POP}).

Let $X$ and $Y$ be compact intervals. A function $f:X\times X\rightarrow \mathbb{R}$ is called $B$-continuous (B$\ddot{o}$gel Continuous) at a point $(x_0,y_0)\in X\times Y $ if
$$\lim_{(x,y)\rightarrow (x_0,y_0)}\Delta_{(x,y)}f[x_0,y_o;x,y],$$ for any $(x_0,y_0)\in X\times Y,$ where $\Delta_{(x,y)}f[x_0,y_o;x,y]=f(x,y)-f(x,y_0)-f(x_0,y)+f(x_0,y_0).$ The function $f:X\times X\rightarrow \mathbb{R}$ is $B$-bounded if there exists $\Omega>0$ such that $\mid\Delta_{(x,y)}f[u,v;x,y]\mid \ \leq \Omega,$ for every $(x,y), (u,v)\in X\times Y .$
Throughout this paper, we denote $B_{b}(X\times Y)$ and $C_{b}(X\times Y)$ be the space of all $B$-bounded and $B$-continuous functions on $X\times Y$ respectively. Let $C_{b}(\mathbb{R}^2)$ be the space of all $B$-continuous functions defined on $\mathbb{R}^2.$

Motivated by the above work, we construct the bivariate Kantorovich type sampling operator $(S_{w}),$ associated with GBS operators, for any $f\in C_{b}(\mathbb{R}{^2})$ as,
$$ K_w^{\chi}(f(u,v);x,y) :=  S_{w}(f(u,y)+f(x,v)-f(u,v); x,y)$$
for all $ (x,y) \in \mathbb{R}^{2}.$ The GBS operator of bivariate Kantorovich type is defined as,\\

\noindent
$(K_w^{\chi})(f;x,y)$
\begin{eqnarray*} \label{K}
&=&\sum_{k=-\infty}^{\infty}\sum_{j=-\infty}^{\infty} \chi(wx-k, wy-j) \ w^{2}\int_{\frac{k}{w}}^{\frac{k+1}{w}} \int_{\frac{j}{w}}^{\frac{j+1}{w}}
(f(x,v)+f(u,y)-(u,v))\  du \ dv,
\end{eqnarray*}
where, $w\in \mathbb{R}^{+}$ and $(x,y)\in \mathbb{R}^2.$
We shall estimate the rate of convergence of the sequences of these operators to $f\in C_{b}(\mathbb{R}{^2})$ in terms of the modulus of continuity in B$\ddot{o}$gel sense.\\
Let $f\in C_{b}(\mathbb{R}{^2}).$ Then, the mixed modulus of smoothness of $f$ is given by,
$$\omega_{B}(f;\delta_1,\delta_2):=\sup\{\mid\Delta_{(x,y)}f[u,v;x,y]\mid: \mid x-u\mid <\delta_1, \mid y-v\mid<\delta_2\},$$ for all $(x,y), (u,v)\in\mathbb{R}^{2}$ and for any $(\delta_1,\delta_2)\in (0,\infty)\times (0,\infty)$ with $\omega_{B}:(0,\infty)\times (0,\infty)\rightarrow\mathbb{R}.$ The properties of $\omega_{B}$ can be found in (\cite{Badea C}).

\begin{theorem} \label{Theorem 6}
Let $ f \in C_{b}( \mathbb{R}^{2}).$ Then, the operator $(K_w^{\chi})$ satisfies the following estimate,
$$ \Big| (K_w^{\chi})(f;x,y) - f(x,y) \Big| \leq \Bigg( 1+ \frac{A}{\delta_{1}} + \frac{B}{\delta_{2}} + \frac{C}{\delta_{1} \delta_{2} }\Bigg). \omega_{B}(f; \delta_{1}, \delta_{2}) $$
where, $ A= \frac{1}{2w} \big[ M_{0,0}^0+ 2 M_{1,0}^1 \big],$ $B= \frac{1}{2w} \big[ M_{0,0}^0+ 2 M_{0,1}^1 \big]$ and, $C= \frac{1}{4w^{2}} \big[ M_{0,0}^0+ 2 M_{1,0}^1+ 2 M_{0,1}^1+ 4 M_{1,1}^2  \big].$
\end{theorem}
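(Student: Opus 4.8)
The plan is to combine the defining identity of the GBS operator with the fact that $S_{w}$ reproduces constants. Since $\chi$ satisfies $(\textbf{K2})$, the computation $S_{w}(c;x,y)=c\sum_{k\in\mathbb{Z}}\sum_{j\in\mathbb{Z}}\chi(wx-k,wy-j)=c$ shows that $S_{w}$ fixes every constant; applying this to $f(x,y)$, regarded as constant in the integration variables $(u,v)$, I would write
\begin{eqnarray*}
(K_{w}^{\chi})(f;x,y)-f(x,y) &=& S_{w}\big(f(u,y)+f(x,v)-f(u,v)-f(x,y)\,;\,x,y\big)\\
&=& -\,S_{w}\big(\Delta_{(u,v)}f[x,y;u,v]\,;\,x,y\big),
\end{eqnarray*}
where $\Delta_{(u,v)}f[x,y;u,v]=f(u,v)-f(u,y)-f(x,v)+f(x,y)$. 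Taking the modulus inside the double sum and the double integral — legitimate because, by $(\textbf{K1})$--$(\textbf{K3})$ and the $B$-boundedness of $f$, all the series involved converge absolutely — gives
$$\big|(K_{w}^{\chi})(f;x,y)-f(x,y)\big|\leq\sum_{k\in\mathbb{Z}}\sum_{j\in\mathbb{Z}}\big|\chi(wx-k,wy-j)\big|\;w^{2}\!\!\int_{\frac{k}{w}}^{\frac{k+1}{w}}\!\!\int_{\frac{j}{w}}^{\frac{j+1}{w}}\big|\Delta_{(u,v)}f[x,y;u,v]\big|\,du\,dv.$$

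Next I would estimate the inner quantity by the mixed modulus of smoothness. Directly from the definition, $\big|\Delta_{(u,v)}f[x,y;u,v]\big|\leq\omega_{B}(f;|u-x|,|v-y|)$, and by the standard property of $\omega_{B}$ (see \cite{Badea C}) one has, for any $\delta_{1},\delta_{2}>0$,
$$\omega_{B}(f;|u-x|,|v-y|)\leq\Big(1+\frac{|u-x|}{\delta_{1}}\Big)\Big(1+\frac{|v-y|}{\delta_{2}}\Big)\,\omega_{B}(f;\delta_{1},\delta_{2}).$$
Expanding the product, the estimate decomposes into four sums weighted respectively by $1$, $|u-x|/\delta_{1}$, $|v-y|/\delta_{2}$ and $|u-x|\,|v-y|/(\delta_{1}\delta_{2})$, all multiplied by the common factor $\omega_{B}(f;\delta_{1},\delta_{2})$.

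The main work is then the evaluation of these four sums. Using $w^{2}\int_{j/w}^{(j+1)/w}dv=w$ and the elementary sharp bound $\int_{k/w}^{(k+1)/w}|u-x|\,du\leq\dfrac{|x-k/w|}{w}+\dfrac{1}{2w^{2}}$ (together with its analogue in $v$), each cell integral is controlled: the $1$-term contributes $\sum\sum|\chi(wx-k,wy-j)|=M_{0,0}^{0}(\chi)$ (equal to $1$ under the normalization $(\textbf{K2})$ for the nonnegative kernels of the examples); the $|u-x|/\delta_{1}$-term becomes $\frac{1}{2w\delta_{1}}\sum\sum|\chi(wx-k,wy-j)|\big(2|wx-k|+1\big)=\frac{1}{2w\delta_{1}}\big(2M_{1,0}^{1}+M_{0,0}^{0}\big)=A/\delta_{1}$, and symmetrically the third term is $B/\delta_{2}$; for the last term the cell integral factorizes, $w^{2}\int\!\int|u-x|\,|v-y|\,du\,dv\leq\frac{1}{w^{2}}\big(|wx-k|+\tfrac12\big)\big(|wy-j|+\tfrac12\big)$, and expanding and summing against $|\chi(wx-k,wy-j)|$ yields $\frac{1}{4w^{2}}\big(4M_{1,1}^{2}+2M_{1,0}^{1}+2M_{0,1}^{1}+M_{0,0}^{0}\big)/(\delta_{1}\delta_{2})=C/(\delta_{1}\delta_{2})$. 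Summing the four contributions gives precisely $\big(1+\frac{A}{\delta_{1}}+\frac{B}{\delta_{2}}+\frac{C}{\delta_{1}\delta_{2}}\big)\omega_{B}(f;\delta_{1},\delta_{2})$, as claimed.

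I expect the only genuine difficulty to be bookkeeping rather than ideas: one must use the sharp cell estimate $\int_{k/w}^{(k+1)/w}|u-x|\,du\leq|x-k/w|/w+1/(2w^{2})$ (not the cruder $|u-x|\leq|x-k/w|+1/w$) in order to obtain the exact constants $\tfrac12$ and $\tfrac14$ appearing in $A,B,C$, and one must carefully match each term produced to the correct absolute moment $M_{p_{1},p_{2}}^{\eta}$. Note finally that $\delta_{1},\delta_{2}$ remain free throughout, so the inequality holds for every positive $\delta_{1},\delta_{2}$; choosing $\delta_{1}=\delta_{2}=w^{-1}$ then gives the convergence rate $\big|(K_{w}^{\chi})(f;x,y)-f(x,y)\big|=O\big(\omega_{B}(f;w^{-1},w^{-1})\big)$.
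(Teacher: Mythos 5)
Your proposal is correct and follows essentially the same route as the paper: reduce $(K_w^{\chi}f)(x,y)-f(x,y)$ to $-S_w(\Delta_{(x,y)}f[u,v;x,y];x,y)$ using that $S_w$ reproduces constants via $(\textbf{K2})$, apply the standard inequality $\omega_B(f;|u-x|,|v-y|)\le(1+|u-x|/\delta_1)(1+|v-y|/\delta_2)\,\omega_B(f;\delta_1,\delta_2)$, and evaluate $S_w(|u-x|)$, $S_w(|v-y|)$, $S_w(|u-x|\,|v-y|)$ via the cell estimate $\int_{k/w}^{(k+1)/w}|u-x|\,du\le |x-k/w|/w+1/(2w^2)$ to obtain exactly the constants $A$, $B$, $C$. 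If anything, your version is slightly more careful than the paper's in keeping $|\chi|$ and absolute values throughout the intermediate sums.
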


\begin{proof}{.}
Since, $ w_{B}(f;a \ \delta_{1},b \ \delta_{2}) \leq (1+a)(1+b)\  \omega_{mixed}(f; \delta_{1}, \delta_{2}) ,$
for $ a, b \ > 0,$
we have,
\begin{eqnarray} \nonumber \label{delta}
\mid\Delta_{(x,y)}f[u,v;x,y]\mid &\leq & \omega_{B} (f; |u-x|, |v-y|) \\
& \leq & \Bigg( 1+ \frac{|u-x|}{\delta_{1}} \Bigg) \ \Bigg(1+ \frac{|v-y|}{\delta_{2}} \Bigg) \omega_{B}(f; \delta_{1}, \delta_{2}) ,\\ \nonumber
\end{eqnarray}
for every $ (x,y) , (u,v) \in \mathbb{R}^{2}$ and for any $ \delta_{1}, \delta_{2} > 0.$ Now, applying the operator $(K_w^{\chi})$ on the definition of $\Delta_{(x,y)}f[u,v;x,y],$ we obtain,
$$ K_w^{\chi} (f;x,y)= f(x,y) - S_{w} (\Delta_{(x,y)}f[u,v;x,y])\ . $$

In veiw of (\ref{delta}), we have
\begin{eqnarray} \nonumber \label{8}
\Big| (K_w^{\chi})(f;x,y) - f(x,y) \Big| &\leq & \Big( 1 + \delta_{1}^{-1}. S_{w}(|u-x|;x,y)+ \delta_{2}^{-1}. S_{w}(|v-y|;x,y) + \\
&& \delta_{1}^{-1}.\delta_{2}^{-1} S_{w}(|u-x|.|v-y|;x,y) \Big). \omega_{B}(f;\delta_{1}, \delta_{2})
\end{eqnarray}

Using the definition of $(S_{w}),$ we obtain
\begin{eqnarray*}
S_{w} \big( |u-x| \big) &=& \sum_{k= -\infty}^{+\infty} \sum_{j= -\infty}^{+\infty} \chi(wx-k,wy-j) \ w^{2}\int_{\frac{k}{w}}^{\frac{k+1}{w}} \int_{\frac{j}{w}}^{\frac{j+1}{w}}  |u-x| du \ dv \\
&=& \frac{1}{2w}  \sum_{k= -\infty}^{+\infty} \sum_{j= -\infty}^{+\infty} \chi(wx-k,wy-j)[1+ 2(wx-k)] \\
&=& \frac{1}{2w} \big[ M_{0,0}^0+ 2 M_{1,0}^1 \big] :=A \ .
\end{eqnarray*}

Similarly, we get
\begin{eqnarray*}
S_{w} \big( |v-y| \big) &=& \sum_{k= -\infty}^{+\infty} \sum_{j= -\infty}^{+\infty} \chi(wx-k,wy-j) \ w^{2}\int_{\frac{k}{w}}^{\frac{k+1}{w}} \int_{\frac{j}{w}}^{\frac{j+1}{w}}   |v-y| du \ dv \\
&=& \frac{1}{2w} \big[ M_{0,0}^0+ 2 M_{0,1}^1 \big] :=B \  .
\end{eqnarray*}
and $ S_{w} \big( |u-x|.|v-y| \big) = \frac{1}{4w^{2}} \big[ M_{0,0}^0+ 2 M_{1,0}^1+ 2 M_{0,1}^1+ 4 M_{1,1}^2  \big] := C \  . $ On substituting the values of $A, B$ and $C$ in (\ref{8}), we get the desired result.
\end{proof}

Now, we define a B-differentiable (B$\ddot{o}$gel differentiable) function. A function $ f: X \times Y (\subset \mathbb{R}^{2}) \rightarrow \mathbb{R}$ is said to be B-differentiable at $ (x_{0},y_{0}),$ if the following limit exists finitely,
$$ \lim_{(x,y)\rightarrow (x_{0},y_{0})} \frac{\Delta_{(x,y)}f[x_{0},y_{o};x,y]}{(x-x_{0}) (y-y_{0})}.$$

The B-differential of $f$ at any point $(x_{0},y_{0})$ is denoted by $ D_{B}(f;x_{0},y_{0}).$ We denote the space of all B-differentiable functions defined on $ X \times Y $ as $ D_{b}(X \times Y).$

\begin{theorem} \label{Theorem 7}
Let $ f \in D_{b}(\mathbb{R}^{2})$ and $ D_{B}f \in B(\mathbb{R}^{2}).$ Then, for each $ (x,y) \in \mathbb{R}^{2},$ we have
\begin{eqnarray*}
\Big| (K_w^{\chi})(f;x,y) - f(x,y) \Big| &\leq &  D. \big(3 \|D_{B}f\|_{\infty}+ \omega_{B}(D_{B}f;\delta_{1},\delta_{2}) \big) + \\&&
\big[ E. \delta_{1}^{-1}+ F. \delta_{2}^{-1}+ G. \delta_{1}^{-1}\delta_{2}^{-1} \big]\omega_{B}(D_{B}f;\delta_{1},\delta_{2}) ,
\end{eqnarray*}
where, $D= \frac{1}{4w^{2}} \big[ M_{0,0}^0+ 2 M_{1,0}^1+ 2 M_{0,1}+ 4 M_{1,1}  \big] ,$ $E= \frac{1}{6w^{3}} \big[ M_{0,0}^1+ 3 M_{2,0}^2+ 3 M_{1,0}^1+ 2 M_{0,1}^1+ 6 M_{2,1}^3+ 6 M_{1,1}^2  \big],$ $F=  \frac{1}{6w^{3}} \big[ M_{0,0}^0+ 3 M_{0,2}^2+ 3 M_{0,1}^1+ 2 M_{1,0}^1+ 6 M_{1,2}^3+ 6 M_{1,1}^2  \big] $ and, $ G= \frac{1}{9 w^{4}} \big[ M_{0,0}^0+ 3 M_{2,0}^2+3 M_{0,2}^2+ 3 M_{0,1}^1+ 3 M_{1,0}^1+ 9 M_{2,2}^4+9 M_{1,2}^3+ 9 M_{2,1}^3+ 9 M_{1,1}^2  \big]  . $
\end{theorem}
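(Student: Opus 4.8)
The plan is to follow the pattern of the proof of Theorem~6, replacing the first‑order modulus estimate with a mean value theorem in the Bögel sense. First I would record the same reduction used there: since $f(x,v)+f(u,y)-f(u,v)=f(x,y)-\Delta_{(x,y)}f[u,v;x,y]$ and $S_w(1;x,y)=1$ by (K2), we get the identity $(K_w^{\chi})(f;x,y)=f(x,y)-S_w\big(\Delta_{(x,y)}f[u,v;x,y];x,y\big)$, where the mixed difference is regarded as a function of $(u,v)$ with $(x,y)$ held fixed. Hence everything reduces to bounding $S_w$ (with absolute values placed on the kernel, exactly as in the $A,B,C$ computations of Theorem~6) applied to $\Delta_{(x,y)}f[u,v;x,y]$.

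Next, because $f\in D_b(\mathbb{R}^2)$, I would invoke Bögel's mean value theorem for $B$-differentiable functions: for $u\neq x$, $v\neq y$ there exist $\xi\equiv\xi_{k,w}(u)$ strictly between $x$ and $u$ and $\eta\equiv\eta_{j,w}(v)$ strictly between $y$ and $v$, measurable in $u$ and $v$ just like the functions $\theta_{k,w}$ in Theorem~3, with
$$\Delta_{(x,y)}f[u,v;x,y]=(u-x)(v-y)\,D_Bf(\xi,\eta).$$
I would then peel off the value at $(x,y)$ through the elementary identity $D_Bf(\xi,\eta)=\Delta_{(\xi,\eta)}(D_Bf)[x,y;\xi,\eta]+D_Bf(\xi,y)+D_Bf(x,\eta)-D_Bf(x,y)$, which, using $|\xi-x|<|u-x|$, $|\eta-y|<|v-y|$, monotonicity of $\omega_B$, and the boundedness assumption $D_Bf\in B(\mathbb{R}^2)$, gives the pointwise bound $|D_Bf(\xi,\eta)|\le 3\|D_Bf\|_\infty+\omega_B(D_Bf;|u-x|,|v-y|)$. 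Finally I apply the same subadditivity used in Theorem~6, $\omega_B(D_Bf;|u-x|,|v-y|)\le\big(1+\delta_1^{-1}|u-x|\big)\big(1+\delta_2^{-1}|v-y|\big)\omega_B(D_Bf;\delta_1,\delta_2)$.

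Putting these together, $\big|(K_w^{\chi})(f;x,y)-f(x,y)\big|$ is bounded by $S_w$ applied to
$$|u-x|\,|v-y|\Big(3\|D_Bf\|_\infty+\big(1+\delta_1^{-1}|u-x|\big)\big(1+\delta_2^{-1}|v-y|\big)\,\omega_B(D_Bf;\delta_1,\delta_2)\Big).$$
Expanding the product, I would evaluate term by term the four quantities $S_w(|u-x|\,|v-y|)$, $S_w(|u-x|^2|v-y|)$, $S_w(|u-x|\,|v-y|^2)$ and $S_w(|u-x|^2|v-y|^2)$; after the substitution $u=(k+t)/w$, $v=(j+s)/w$ and integration over $t,s\in(0,1)$, each collapses to a finite linear combination of the absolute moments $M^{\eta}_{(p_1,p_2)}(\chi)$, and these combinations are exactly the constants $D,E,F,G$ appearing in the statement. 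Collecting the terms then yields the asserted inequality.

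The genuinely conceptual ingredient is the appeal to the Bögel mean value theorem and the measurable selection of $\xi_{k,w}(u)$, $\eta_{j,w}(v)$, which must be checked to guarantee absolute convergence of the resulting double series and integrals; this is handled just as the remainder $\hat R_2^{w}$ was controlled in Theorem~3, using the boundedness of $D_Bf$ together with finiteness of the relevant absolute moments. The main labor — rather than any real obstacle — is the explicit integration over the cell $[k/w,(k+1)/w]\times[j/w,(j+1)/w]$ that converts $w^2\!\int\!\!\int|u-x|^{p_1}|v-y|^{p_2}\,du\,dv$ into the moment combinations, and the bookkeeping that matches them against the stated $D,E,F,G$.
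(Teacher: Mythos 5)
Your proposal is correct and follows essentially the same route as the paper's proof: the Bögel mean value representation $\Delta_{(x,y)}f[u,v;x,y]=(u-x)(v-y)D_Bf(p,q)$, the decomposition of $D_Bf(p,q)$ into a mixed difference plus three bounded values yielding the $3\|D_Bf\|_{\infty}$ term, the product inequality for $\omega_B$, and the evaluation of $S_w$ on $|u-x|^{p_1}|v-y|^{p_2}$ to produce $D,E,F,G$. The extra care you take with measurable selection of the intermediate points and absolute convergence is a reasonable refinement but does not change the argument.
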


\begin{proof}{.}
If $f \in D_{b}(\mathbb{R}^{2})$, then we have
$$ \Delta_{(x,y)}f[u,v;x,y] = (u-x) (v-y) D_{B} f(p,q) ,$$
where, $ p \in (x,u)$ and $ q \in (y,v).$ Using the definition of $\Delta_{(x,y)}f[u,v;x,y],$ we have
$$ D_{B} f(p,q) = \Delta_{(x,y)} D_{B} f(p,q)+   D_{B} f(p,y)+  D_{B} f(x,q) -  D_{B} f(x,y) .$$

Since, $ D_{B} f \in B(\mathbb{R}^{2}),$ using the above equality, we have \\

\noindent
$ \big |S_{w}( \Delta_{(x,y)}f[u,v;x,y];x,y)\big |$
\begin{eqnarray*}
&=& \big| S_{w} \big( (u-x)(v-y) D_{B} f(p,q); x,y \big| \\
& \leq & S_{w} \big( |u-x| |v-y| \big| \Delta_{(x,y)}f(p,q)\big|;x,y \big) +\\
&& S_{w} \Big( |u-x| |v-y| \big( | D_{B} f(p,y)| + | D_{B} f(x,q)| + | D_{B} f(x,y)| \big);x,y) \Big)\\
& \leq & S_{w} \Big(|u-x| |v-y|. \ \omega_{B} \big( D_{B} f; |p-x|,|q-y| \big);x,y \Big) + \\
&& 3 \| D_{B} f \|_{\infty} . S_{w} \big( |u-x| |v-y|;x,y \big).
\end{eqnarray*}

Now, Using the monotonocity of mixed modulus of smoothness $ \omega_{B}$, we can write\\

\noindent
$ \Big| (K_w^{\chi})(f;x,y) - f(x,y) \Big| $
\begin{eqnarray} \nonumber \label{9}
&\leq & 3 \|D_{B}f \|_{\infty} .S_{w} \big( |u-x| |v-y|; x,y \big) + \Big[S_{w} \big( |u-x| |v-y|; x,y \big)+\\ \nonumber
&& \delta_{1}^{-1}  S_{w} \big( |u-x|^{2} |v-y|; x,y \big) + \delta_{2}^{-1} S_{w} \big( |u-x| |v-y|^{2}; x,y \big)\\
&& \delta_{1}^{-1} \delta_{2}^{-1} S_{w} \big( (u-x)^{2} (v-y)^{2}; x,y \big) \Big] \omega_{B} \big( D_{B}f; \delta_{1},\delta_{2} \big).\\ \nonumber
\end{eqnarray}
By the definition of $(S_{w}),$ we get the following estimates,
\begin{eqnarray*}
S_{w} \big( |u-x| |v-y| \big) &=& \frac{1}{4w^{2}} \big[ M_{0,0}^0+ 2 M_{1,0}^1+ 2 M_{0,1}+ 4 M_{1,1}  \big] :=D \\
S_{w} \big( |u-x|^{2} |v-y| \big) &=& \frac{1}{6w^{3}} \big[ M_{0,0}^1+ 3 M_{2,0}^2+ 3 M_{1,0}^1+ 2 M_{0,1}^1+ 6 M_{2,1}^3+ 6 M_{1,1}^2  :=E \\
S_{w} \big( |u-x| |v-y|^{2} \big) &=&  \frac{1}{6w^{3}} \big[ M_{0,0}^0+ 3 M_{0,2}^2+ 3 M_{0,1}^1+ 2 M_{1,0}^1+ 6 M_{1,2}^3+ 6 M_{1,1}^2  \big] :=F\\
S_{w} \big( |u-x|^{2} |v-y|^{2} \big) &=& \frac{1}{9 w^{4}} \big[ M_{0,0}^0+ 3 M_{2,0}^2+3 M_{0,2}^2+ 3 M_{0,1}^1+ 3 M_{1,0}^1+ 9 M_{2,2}^4+ \\&&
9 M_{1,2}^3+ 9 M_{2,1}^3+ 9 M_{1,1}^2  \big] := G
\end{eqnarray*}
Using these estimates in (\ref{9}), we can easily get the result.

\end{proof}

The mixed K- functional (\cite{Badea,Cottin}) can be defined as,
$$ \hat{K}(f;t_{1},t_{2}, \eta) = \inf_{g_{1}, g_{2}, h} \big \{ \| f- g_{1}- g_{2}- h \|_{\infty} + t_{1} \| D_{B}^{2,0} g_{1} \|_{\infty} + t_{2} \| D_{B}^{0,2} g_{2} \|_{\infty} + \eta \| D_{B}^{2,2} h \|_{\infty} \big \}$$
where, $g_{1} \in C_{B}^{2,0},$ $g_{2} \in C_{B}^{0,2}$ and $ h \in C_{B}^{2,2}.$ Here, $ C_{B}^{p,q}$ denotes the space of the functions $ f \in C_{b}(\mathbb{R}^2)$ with continuous mixed partial derivatives $ D_{B}^{\xi,\eta}f,$ $ 0 \leq \xi \leq p,$ $0 \leq \eta \leq q.$ \\
The mixed partial derivatives are defined as,
$$D_{B}^{1,0}(f;x,x_{0}) = \lim_{x \rightarrow x_{0}} \frac{\Delta_{x}f \big([x,x_{0}];y_{0} \big)}{(x-x_{0})}$$
and,
$$D_{B}^{0,1}(f;y,y_{0}) = \lim_{y \rightarrow y_{0}} \frac{\Delta_{y}f \big(x_{0};[y_{0},y] \big)}{(y-y_{0})},$$
where, $ \Delta_{x}f \big([x,x_{0}];y_{0} \big)= f(x,y_{0})- f(x_{0},y_{0})$ and, $\Delta_{y}f \big(x_{0};[y_{0},y] \big)= f(x_{0},y)- f(x_{0},y_{0}).$ Similarly, we can define the higher order mixed partial derivative as in case of ordinary derivatives.

Now, we estimate the order of convergence in terms of mixed K- functional.

\begin{theorem} \label{Theorem 8}
Let $ (K_w^{\chi})$ be GBS operator of $ (S_{w})$. Then, for every $ f \in C_{b}(\mathbb{R}^{2}),$
$$ \big|  K_w^{\chi}(f;x,y) - f(x,y) \big|  \leq \hat{K}(f; H,J,L) .$$ \\
Here, $ H = \frac{1}{3 w^{2}} \big[ M_{0,0}+ 3 M_{2,0}+ 3 M_{1,0} \big],$ $ J= \frac{1}{3 w^{2}} \big[ M_{0,0}+ 3 M_{0,2}+ 3 M_{0,1} \big] $ and $L = \frac{1}{9 w^{4}} \big[ M_{0,0}+ 3 M_{2,0}+3 M_{0,2}+ 3 M_{0,1}+ 3 M_{1,0}+ 9 M_{2,2}+ 9 M_{1,2}+ 9 M_{2,1}+ 9 M_{1,1}  \big].$
\end{theorem}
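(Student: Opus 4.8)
The plan is to run the classical $K$-functional argument adapted to the Boolean-sum setting, starting from the identity already obtained in the proof of Theorem~6, namely
$$K_w^{\chi}(f;x,y)-f(x,y)=-\,S_w\big(\Delta_{(x,y)}f[u,v;x,y];x,y\big),$$
which holds by linearity of $S_w$ (hence of $K_w^{\chi}$) together with the normalisation $S_w(1;x,y)=1$ coming from $(\textbf{K2})$. Since $\Delta_{(x,y)}$ is linear in $f$, for any $g_1\in C_{B}^{2,0}$, $g_2\in C_{B}^{0,2}$, $h\in C_{B}^{2,2}$ and $\phi:=f-g_1-g_2-h$ one gets
$$K_w^{\chi}(f;x,y)-f(x,y)=-S_w(\Delta_{(x,y)}\phi)-S_w(\Delta_{(x,y)}g_1)-S_w(\Delta_{(x,y)}g_2)-S_w(\Delta_{(x,y)}h),$$
so it is enough to bound the four pieces separately and then take the infimum over $g_1,g_2,h$.

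For the rough piece, the trivial estimate of the B\"ogel difference of $\phi$ together with $S_w(1;x,y)=1$ gives $\big|S_w(\Delta_{(x,y)}\phi;x,y)\big|\le\|\phi\|_{\infty}$. For $g_1$ I would apply Taylor's formula with Lagrange remainder of order two in the first variable only, to the one-variable function $s\mapsto g_1(s,v)-g_1(s,y)$ (legitimate since $g_1\in C_{B}^{2,0}$); in the resulting expansion of $\Delta_{(x,y)}g_1$ the term of order $(u-x)$ has the form $(u-x)\big[D_{B}^{1,0}g_1(x,v)-D_{B}^{1,0}g_1(x,y)\big]$ and is annihilated once $S_w$ is applied, by exactly the moment/partition-of-unity mechanism already used for the algebraic moments in Theorems~1--3, while the Lagrange remainder contributes at most $\|D_{B}^{2,0}g_1\|_{\infty}\,S_w(|u-x|^{2};x,y)$. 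An elementary integration of the same type as the computation of $S_w(|u-x|;x,y)$ in the proof of Theorem~6 yields $S_w(|u-x|^{2};x,y)\le\tfrac{1}{3w^{2}}\big[M_{0,0}^{0}+3M_{2,0}^{2}+3M_{1,0}^{1}\big]=H$. By the symmetric argument in the second variable, the $g_2$-piece is bounded by $\|D_{B}^{0,2}g_2\|_{\infty}\,S_w(|v-y|^{2};x,y)\le J$, and for $h\in C_{B}^{2,2}$ I would use the mixed Taylor expansion of order $(2,2)$, whose surviving leading term is $\tfrac14(u-x)^{2}(v-y)^{2}D_{B}^{2,2}h(\cdot,\cdot)$ with all lower-order terms killed by $S_w$, so that this piece is at most $\|D_{B}^{2,2}h\|_{\infty}\,S_w(|u-x|^{2}|v-y|^{2};x,y)\le L$, the last bound obtained by factoring the double integral and invoking the two one-dimensional estimates just proved.

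Combining the four bounds gives, for every admissible triple,
$$\big|K_w^{\chi}(f;x,y)-f(x,y)\big|\le\|f-g_1-g_2-h\|_{\infty}+H\|D_{B}^{2,0}g_1\|_{\infty}+J\|D_{B}^{0,2}g_2\|_{\infty}+L\|D_{B}^{2,2}h\|_{\infty},$$
and taking the infimum over $g_1\in C_{B}^{2,0}$, $g_2\in C_{B}^{0,2}$, $h\in C_{B}^{2,2}$ produces precisely $\hat{K}(f;H,J,L)$, which is the assertion.

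The step I expect to be the main obstacle is the B\"ogel--Taylor bookkeeping for $g_1$, $g_2$ and $h$: one must verify that every term in those expansions carrying a derivative other than $D_{B}^{2,0}g_1$, $D_{B}^{0,2}g_2$ or $D_{B}^{2,2}h$ — in particular all first-order and ``wrong mixed-order'' terms, which are not controlled by the seminorms appearing in $\hat{K}$ — is exactly one of the terms that vanish when $S_w$ is applied, using the moment conditions; only after this cancellation is secured does one check that the three surviving second-order moments $S_w(|u-x|^{2};x,y)$, $S_w(|v-y|^{2};x,y)$ and $S_w(|u-x|^{2}|v-y|^{2};x,y)$ reduce, through the absolute moments $M_{p,q}^{\eta}(\chi)$, exactly to the constants $H$, $J$ and $L$ displayed in the theorem.
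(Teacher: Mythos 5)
Your overall architecture is the same as the paper's: start from the identity $K_w^{\chi}(f;x,y)-f(x,y)=-S_w\big(\Delta_{(x,y)}f[u,v;x,y];x,y\big)$, split $f=(f-g_1-g_2-h)+g_1+g_2+h$, estimate the smooth pieces through Taylor expansions and the quantities $S_w((u-x)^2;x,y)$, $S_w((v-y)^2;x,y)$, $S_w((u-x)^2(v-y)^2;x,y)$, identify these with $H$, $J$, $L$, and take the infimum. The genuine gap sits exactly in the step you single out as the main obstacle, and your proposed resolution of it fails. You claim that the first-order term $(u-x)\big[D_B^{1,0}g_1(x,v)-D_B^{1,0}g_1(x,y)\big]$ is annihilated once $S_w$ is applied, ``by the moment/partition-of-unity mechanism of Theorems 1--3''. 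That cannot be the mechanism: Theorem 8 does not assume the moment condition (\ref{e3}) at all; even when (\ref{e3}) holds with $r=2$, the Kantorovich averaging does not reproduce first-degree terms --- this is precisely the content of Theorem 3, where the drift terms $\frac{1}{2w}(G_wf_x)$ and $\frac{1}{2w}(G_wf_y)$ appear; and, decisively, the coefficient $D_B^{1,0}g_1(x,v)-D_B^{1,0}g_1(x,y)$ depends on the integration variable $v$, so no algebraic moment condition on $\chi$ can make $S_w$ of that product vanish, nor can the term be absorbed into $\|D_B^{2,0}g_1\|_\infty$, since $g_1\in C_B^{2,0}$ carries no smoothness in the second variable.

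The cancellation actually available here is structural, not moment-theoretic: $\Delta_{(x,y)}F[u,v;x,y]=0$ whenever $F$ depends on $u$ alone or on $v$ alone, equivalently the GBS operator $K_w^{\chi}$ reproduces every such function. This is what the paper uses: the Taylor coefficients are frozen at the point $(x,y)$, so that a term like $(u-x)D_B^{1,0}g_1(x,y)$ is a function of $u$ only and is removed by the GBS structure, and only integral remainders are estimated via $\|D_B^{2,0}g_1\|_\infty\, S_w((u-x)^2;x,y)$, and analogously for $g_2$ and $h$. Your organization instead leaves in play a genuinely mixed first-order piece controlled by none of the three seminorms entering $\hat K$, and the same defect recurs for $h$, where terms such as $(u-x)(v-y)D_B^{1,1}h(x,y)$ are certainly not killed by moments of $\chi$ (the paper's own very quick treatment of these mixed lower-order pieces is also the weakest point of its proof, but in no case do kernel moments enter). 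Until the expansions are rearranged so that every discarded term is one annihilated by $\Delta_{(x,y)}$, or otherwise absorbed into $\|D_B^{2,0}g_1\|_\infty$, $\|D_B^{0,2}g_2\|_\infty$, $\|D_B^{2,2}h\|_\infty$, the argument does not close. Two smaller points: $|\Delta_{(x,y)}\phi[u,v;x,y]|\le 4\|\phi\|_\infty$, not $\|\phi\|_\infty$ (the paper itself only reaches $2\|f-g_1-g_2-h\|_\infty$ at this spot), and $S_w((u-x)^2(v-y)^2;x,y)$ does not factor into the product of the two one-dimensional estimates for a general bivariate kernel; only the cell integrals factor, after which one expands and bounds by the absolute moments directly to obtain $L$.
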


\begin{proof}{.}
Since $ g_{1} \in C_{B}^{2,0},$ the Taylor's formula for $ g_{1}$ is given by
$$ g_{1}(u,v)= g_{1}(x,y) + (u-x). D_{B}^{1,0} g_{1}(x,y) + \int_{x}^{u} (u-t)D_{B}^{2,0} g_{1}(t,y) dt .$$

On applying the operator $ (K_w^{\chi})$ both the sides, we obtain
$$ \displaystyle K_w^{\chi}(g_{1},x,y) = g_{1}(x,y)+  K_w^{\chi} \Big( \int_{x}^{u} (u-t)D_{B}^{2,0} g_{1}(t,y) dt; x,y \Big).$$

Now,
\noindent
$ \big| K_w^{\chi}(g_{1};x,y) - g_{1}(x,y) \big| $
\begin{eqnarray*}
&=& \Bigg| S_{w} \Big(\int_{x}^{u} |u-t| \ \big| D_{B}^{2,0} g_{1}(t,y)- D_{B}^{2,0} g_{1}(t,v) dt \big| ; x,y \Big) \Bigg| \\
&\leq & \frac{\|D_{B}^{2,0} g_{1}\|_{\infty}}{2} .\ S_{w} \big( (u-x)^{2};x,y \big)
\end{eqnarray*}
Since,
$S_{w} \big( (u-x)^{2}\big) = \frac{1}{3 w^{2}} \big[ M_{0,0}^0+ 3 M_{2,0}^2+ 3 M_{1,0}^1 \big] := H,$ we have
$$\big| K_w^{\chi}(g_{1};x,y) - g_{1}(x,y) \big| \leq  \frac{\|D_{B}^{2,0} g_{1}\|_{\infty}}{2}  H  \ \ .$$

Similarly, we obatin
\begin{eqnarray*}
\big| K_w^{\chi}(g_{2};x,y) - g_{2}(x,y) \big| &\leq &  \frac{\|D_{B}^{0,2} g_{2}\|_{\infty} }{2}.\ S_{w} \big( (v-y)^{2};x,y \big)\\
&\leq & \frac{\|D_{B}^{0,2} g_{2}\|_{\infty} }{2} J \ ,
\end{eqnarray*}
where, $ J := S_{w} \big( (v-y)^{2};x,y \big) = \frac{1}{3 w^{2}} \big[ M_{0,0}^0+ 3 M_{0,2}^2+ 3 M_{0,1}^1 \big] .$ \\

For, $ h \in C_{B}^{2,2}(\mathbb{R}^{2}),$ we have
\begin{eqnarray*}
h(u,v)&=& h(x,y) + (u-x). D_{B}^{1,0} h(x,y) + (v-y). D_{B}^{0,1} h(x,y)+ \\
&& (u-x)(v-y). D_{B}^{1,1} h(x,y)+ \int_{x}^{u} (u-t)D_{B}^{2,0} h(t,y) dt+ \\
&& \int_{y}^{v} (v-s)D_{B}^{2,0} h(x,s) ds + \int_{x}^{u} (u-t)(v-y) D_{B}^{2,1} h(t,y) dt + \\
&& \int_{y}^{v} (u-t)(v-s) D_{B}^{2,0} h(x,s) ds+\int_{x}^{u} \int_{y}^{v} (u-t)(v-s) D_{B}^{2,0} h(t,s) dt ds.\\
\end{eqnarray*}
Using the definition of $(K_w^{\chi})$, we get
\begin{eqnarray*}
\big| K_w^{\chi}(h;x,y) - h(x,y) \big| &\leq & \Bigg| S_{w} \Big( \int_{x}^{u} \int_{y}^{v} (u-t)(v-s) D_{B}^{2,0} h(t,s) dt ds \Big) \Bigg|\\
& \leq & S_{w} \Big(\int_{x}^{u} \int_{y}^{v} |u-t| |v-s| \  \big|D_{B}^{2,0} h(t,s)\big| dt ds \Big)\\
&\leq & \frac{\|D_{B}^{2,0} h \|_{\infty}}{4} S_{w} \big( (u-x)^{2} (v-x)^{2} ;x,y \big) \\
& \leq &  \frac{\|D_{B}^{2,0} h \|_{\infty}}{4} L \ ,
\end{eqnarray*}
where, $L := S_{w} \big( (u-x)^{2} (v-x)^{2} ;x,y \big) =
\frac{1}{9 w^{4}} \big[ M_{0,0}^0+ 3 M_{2,0}^2+3 M_{0,2}^2+ 3 M_{0,1}^1+ 3 M_{1,0}^1+ 9 M_{2,2}^4+ 9 M_{1,2}^3+ 9 M_{2,1}^3+ 9 M_{1,1}^2  \big] .$\\

Thus, for $ f \in C_{b}(\mathbb{R}^{2}),$ we have\\

\noindent
$\big| K_w^{\chi}(f;x,y) - f(x,y) \big|$
\begin{eqnarray*}
& \leq & \big| (f- g_{1}-g_{2} -h)(x,y)\big| + \big| \big( g_{1} - K_w^{\chi} g_{1} \big) (x,y) \big| + \big| \big( g_{2} - K_w^{\chi} g_{2} \big) (x,y) \big| +\\
&& \big|  K_w^{\chi} (f- g_{1}-g_{2} -h);x,y \big| \\
& \leq & 2 \| f- g_{1}-g_{2} -h \|_{\infty} + H. \ \| D_{B}^{2,0} g_{1} \|_{\infty} + J. \ \| D_{B}^{2,0} g_{1} \|_{\infty} + L. \ \| D_{B}^{2,0} g_{1} \|_{\infty}
\end{eqnarray*}

Taking the infimum over all $ g_{1}  \in C_{B}^{2,0}, g_{2} \in C_{B}^{0,2}$ and $ h \in C_{B}^{2,2},$ we obtain the required result.

\end{proof}

\section{Examples of the kernel}
 Now, we give the kernel which will satisfy the assumptions (K1) to (K3) and condition (\ref{e3}) of section $2$. There are many examples of the univariate kernels satisfying the conditions (K1) to (K3) in one variable, given in \cite{PLB1,COS}, but some of the them fail to satisfy the moment condition (\ref{e3}), for eg., Fejer's kernel, see \cite{Dan}. To find the kernel satisfying the moment condition (\ref{e3}) also, Butzer and Stens  have given a result using well known B-spline kernels in \cite{Stens2}.\\
The central B-splines (univariate) of order $ n \in \mathbb{N} $ has the form ,
$$ M_{n}(x)= \frac{1}{(n-1)!} \sum_{j=0}^{n-1} (-1)^{j} \binom{n}{j} \Big( \frac{n}{2} +t-j \Big)_{+} ^{n-1} $$
where  $ t \in \mathbb{R}$ and $ x_{+}^{r} = max \{x^{r}, 0 \}.$\\

The Fourier transform of $ M_{n}$ is given as,
$$ \hat{M_{n}} = \Big(\frac{sin \  v/2}{v/2} \Big)^{n},$$
where $ n \in \mathbb{N} , v \in \mathbb{R}. $

\begin{theorem}{.}\cite{Stens2}{.}
For $ r \in \mathbb{N}$, $ r \geq 2,$ let $ \epsilon_{0} < \epsilon_{1} < ...< \epsilon_{r-1} $ be any given real numbers, and let $ a_{ \mu_{r}} $ , $ \mu=0,1,2,...,r-1,$ be the unique solutions of the linear system of equations;
$$ \sum_{\mu=0}^{r-1} a_{ \mu_{r}} \big( -i \epsilon_{\mu})^{j} = \Big( \frac{1}{M_{r}} \Big)^{j} (0),$$
for every $ j=0,1,2,...,r-1,$, where i denotes the imaginary unit. Then \\

$$ \chi_{r}(x)= \sum_{\mu=0}^{r-1} a_{ \mu r} M_{r} (t- \epsilon_{\mu})$$
is a polynomial spline of order $r$, satisfying the moment condition $(3)$ and having compact support contained in $ [ \epsilon_{0} - r/2, \epsilon_{r-1} + r/2].$\\
\end{theorem}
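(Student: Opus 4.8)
The plan is to move to the Fourier transform side, where the discrete moment condition (\ref{e3}) becomes a collection of vanishing conditions on $\widehat{\chi_r}$ and its derivatives at the points $2\pi m$, $m\in\mathbb Z$, and then to read those conditions directly off the explicit product form of $\widehat{\chi_r}$. Using the normalization $\widehat{g}(v)=\int_{\mathbb R}g(t)\,e^{-ivt}\,dt$, the Poisson summation formula applied to $t\mapsto\chi(u-t)(u-t)^{j}$ yields, for a kernel $\chi$ with sufficient decay (in particular for any compactly supported spline),
\[
\sum_{k\in\mathbb Z}\chi(u-k)(u-k)^{j}=i^{\,j}\sum_{m\in\mathbb Z}\widehat{\chi}^{\,(j)}(2\pi m)\,e^{2\pi i m u}.
\]
Comparing Fourier coefficients in the variable $u$, the requirement that the left-hand side equal $1$ for $j=0$ and vanish for $1\le j\le r-1$ is equivalent to: $\widehat{\chi}(0)=1$, $\widehat{\chi}^{\,(j)}(0)=0$ for $1\le j\le r-1$, and $\widehat{\chi}^{\,(j)}(2\pi m)=0$ for $0\le j\le r-1$ and $m\ne0$. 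So it will suffice to verify these equalities for $\chi=\chi_r$.

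Since $\widehat{M_r(\cdot-\epsilon_\mu)}(v)=e^{-iv\epsilon_\mu}\,\widehat{M_r}(v)$, the candidate kernel factors as
\[
\widehat{\chi_r}(v)=\widehat{M_r}(v)\,Q(v),\qquad Q(v):=\sum_{\mu=0}^{r-1}a_{\mu r}\,e^{-iv\epsilon_\mu},\qquad \widehat{M_r}(v)=\Big(\tfrac{\sin(v/2)}{v/2}\Big)^{r}.
\]
At each point $v=2\pi m$ with $m\ne0$ the factor $\sin(v/2)/(v/2)$ has a simple zero, so $\widehat{M_r}$ has a zero of order exactly $r$ there; as $Q$ is entire, $\widehat{\chi_r}$ has a zero of order $\ge r$ at $2\pi m$, and hence $\widehat{\chi_r}^{\,(j)}(2\pi m)=0$ for all $0\le j\le r-1$. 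Thus the conditions at the nonzero multiples of $2\pi$ hold automatically, coming purely from the B-spline structure and not from the coefficients.

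It remains to treat the conditions at the origin, and this is where the linear system enters. Since $\widehat{M_r}(0)=1$, the reciprocal $1/\widehat{M_r}$ is analytic near $0$, and from $\widehat{\chi_r}=\widehat{M_r}\cdot Q$ one sees that $\widehat{\chi_r}(v)=1+O(v^{r})$ holds if and only if $Q(v)-1/\widehat{M_r}(v)=O(v^{r})$, i.e. $Q^{(j)}(0)=(1/\widehat{M_r})^{(j)}(0)$ for $j=0,1,\dots,r-1$. Because $Q^{(j)}(0)=\sum_{\mu=0}^{r-1}a_{\mu r}(-i\epsilon_\mu)^{j}$, this is precisely the linear system in the statement; its coefficient matrix is the Vandermonde matrix in the nodes $-i\epsilon_0,\dots,-i\epsilon_{r-1}$, which are pairwise distinct because $\epsilon_0<\cdots<\epsilon_{r-1}$. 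Hence the system has a unique solution $(a_{\mu r})$ — which is exactly what makes the coefficients well defined — and that solution forces $\widehat{\chi_r}(0)=1$ and $\widehat{\chi_r}^{\,(j)}(0)=0$ for $1\le j\le r-1$. Combined with the previous paragraph and the Poisson reformulation, $\chi_r$ then satisfies the moment condition (\ref{e3}) for the orders $1,\dots,r-1$.

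Finally, the structural assertions are immediate: $M_r$ is a polynomial spline of order $r$ with support in $[-r/2,r/2]$, so each translate $M_r(\cdot-\epsilon_\mu)$ is a spline of order $r$ supported in $[\epsilon_\mu-r/2,\epsilon_\mu+r/2]\subseteq[\epsilon_0-r/2,\epsilon_{r-1}+r/2]$, and the finite linear combination $\chi_r$ inherits both properties (and, being continuous, bounded and compactly supported, is trivially a legitimate kernel in the sense of (K1)--(K3)). I expect the main obstacle to be the first step — rigorously justifying the Poisson summation identity and the term-by-term differentiation behind it, and being careful about what the construction actually certifies at order $r$: the numbers $\widehat{\chi_r}^{\,(r)}(2\pi m)$ need not vanish, so the $r$-th discrete moment need not be strictly constant in $u$ unless additional symmetry is imposed on the $\epsilon_\mu$. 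Once the reformulation is in place, the remaining verification is short.
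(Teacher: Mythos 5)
The paper does not actually prove this statement: it is quoted (with some typographical garbling) from Butzer and Stens \cite{Stens2}, so there is no in-paper argument to compare yours against. What you have written is, in substance, the standard proof of that quoted result, and it is correct. Your Poisson-summation reformulation is precisely the equivalence the paper itself later cites as a separate theorem (from \cite{bv1}, \cite{PLB2}): constancy of the discrete moments $\sum_{k}\chi(u-k)(u-k)^{j}$ in $u$ is equivalent to $\widehat{\chi}^{\,(j)}(2\pi m)=0$ for $m\neq 0$ together with the prescribed value at the origin; you could simply invoke that result in one variable and bypass the term-by-term Poisson justification, which for a compactly supported spline is routine in any case. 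The two pillars of your argument are sound: the factorization $\widehat{\chi_r}=\widehat{M_r}\cdot Q$, with $\widehat{M_r}$ vanishing to order exactly $r$ at the nonzero multiples of $2\pi$, disposes of the lattice points $2\pi m$, $m\neq 0$, independently of the coefficients; and the linear system in the statement is exactly the matching of the first $r$ Taylor coefficients of $Q$ with those of $1/\widehat{M_r}$ at $0$, solvable uniquely because the Vandermonde matrix in the distinct nodes $-i\epsilon_\mu$ is invertible. Your closing caveat is also correct and worth keeping: the construction certifies the partition of unity and the vanishing of the moments of orders $1,\dots,r-1$ (equivalently, reproduction of polynomials of degree $r-1$), but it does not make the $r$-th discrete moment constant in $u$, since $\widehat{\chi_r}^{\,(r)}(2\pi m)=\widehat{M_r}^{\,(r)}(2\pi m)\,Q(2\pi m)$ need not vanish for $m\neq 0$; thus the clause ``$=c$ for $\eta=r$'' in (\ref{e3}) is not delivered by this construction as transcribed here, which is consistent with the original formulation in \cite{Stens2}. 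The assertions about spline order and the support being contained in $[\epsilon_0-r/2,\,\epsilon_{r-1}+r/2]$ are immediate, as you say.
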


Here, for $ r=3 $ and $ \epsilon_{0} =2,$ we get,
\begin{equation} \label{e7}
\chi_{3}(x)= \frac{1}{8} \big( 47 M_{3}(x-2) - 62 M_{3}(x-2)+ 23 M_{3}(x-2) \big)
\end{equation}
where,
\begin{equation} \label{e8}
 M_{3}(x) =
     \begin{cases}
      \text{$ \frac{3}{4}- x^{2}$,} &\quad\text{if} \ \ { \mid x \mid \leq \frac{1}{2} ,} \\
     \text{$\frac{1}{2} \big( \frac{3}{2} - \mid x \mid  \big)^{2}$ ,} &\quad\text{if} \ \ {\frac{1}{2} < \mid x \mid \leq \frac{3}{2}, }\\
     \text{$0$,} &\quad\text{if} \ \ {\mid x \mid > \frac{3}{2} .}\\

   \end{cases}
\end{equation} for $x \in \mathbb{R}.$

 With the help of (\ref{e7}) and (\ref{e8}), we define a bivariate kernel as,
 $$ \bar{\chi}(x,y)= \chi_{3}(x). \chi_{3}(y) .$$
 As $ \chi_{3}(x)$ satisfies all the required conditions (see \cite{bv1}, \cite{Stens2}), so the kernel $ \bar{\chi}(x,y)$ will also satisfy all the assumptions of section $2.$ \\

The following result (see \cite{PLB2}) provides us another tool to construct some more examples of kernel satisfying the moment condition (\ref{e3}).

\begin{theorem}{.}{\cite{bv1}} Let $ \chi: \mathbb{R}^{2} \longrightarrow \mathbb{R}$ be a continuous and bounded function such that $ M_{r} (\chi) < + \infty $ for some $r \in \mathbb{N}_{0},$ $ p= (p_{1},p_{2}) \in\mathbb{N}_{0}^{2}$ with $|p|= p_{1}+p_{2} \leq r,$ the double series in the definition of $ M_{r} (\chi) $ is unifomly convergent on each compact subset of $ \mathbb{R}^{2}$ for each $p$ such that $ |p| =r ,$ then, the following two statements are equivalent for $ c \in \mathbb{R},$\\
(a) for every $ (x,y) \in \mathbb{R}^{2},$
$$ \sum_{k \in \mathbb{Z}} \sum_{j \in \mathbb{Z}} \chi(u-k,v-j) (u-k)^{p_{1}} (v-j)^{p_{2}} = c $$
(b) \begin{equation*}
\frac{\delta^{|p|} \hat{\chi}}{ \delta u ^{p_{1}} \delta v^{p_{2}}}(2k \pi,2j \pi) =
\begin{cases}
      \text{$(-1)^{|p|} \frac{c}{2 \pi}$ , } &\quad\text{if} \ \ { (k,j)=(0,0)} \\
     \text{0,} &\quad\text{if} \ \ {(k,j)\neq } (0,0)\\
   \end{cases}
\end{equation*}

\end{theorem}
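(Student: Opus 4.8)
The plan is to read statement (a) as the assertion that the \emph{periodization} of the auxiliary function $\psi(u,v):=u^{p_{1}}v^{p_{2}}\chi(u,v)$ is identically equal to the constant $c$, and then to compare Fourier coefficients. Fix $p=(p_{1},p_{2})$ with $|p|=r$ and set
$$ F(u,v):=\sum_{k\in\mathbb{Z}}\sum_{j\in\mathbb{Z}}\psi(u-k,v-j),$$
which is $1$-periodic in each variable. The hypotheses are exactly what is needed to make this rigorous: since the moments up to order $r$ are finite, splitting $\mathbb{R}^{2}$ into unit cells and using $|u+k|\le|k|+1$ on $[0,1]^{2}$ gives $\int_{\mathbb{R}^{2}}|\psi|\le M_{(p_{1},p_{2})}^{r}(\chi)<+\infty$, so $\psi\in L^{1}(\mathbb{R}^{2})$; and the assumed uniform convergence of the moment series on compact sets guarantees that $F$ is continuous.

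First I would establish the Poisson-type identity relating the Fourier coefficients of the periodization to samples of $\widehat{\psi}$: for $(k,j)\in\mathbb{Z}^{2}$,
\begin{eqnarray*}
\widehat{F}(k,j) &=& \int_{[0,1]^{2}}F(u,v)\,e^{-2\pi i(ku+jv)}\,du\,dv \\
&=& \int_{\mathbb{R}^{2}}\psi(u,v)\,e^{-2\pi i(ku+jv)}\,du\,dv ,
\end{eqnarray*}
where the interchange of summation and integration is justified by $\psi\in L^{1}$ and the unfolding of the sum into one integral over $\mathbb{R}^{2}$ uses the $1$-periodicity of the exponential. Thus each $\widehat{F}(k,j)$ is, up to the fixed constant dictated by the normalization of the Fourier transform, the value $\widehat{\psi}(2\pi k,2\pi j)$.

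Next I would use the polynomial--derivative duality of the Fourier transform: because $\psi=u^{p_{1}}v^{p_{2}}\chi$ and the absolute moments up to order $r$ are finite, one may differentiate $\widehat{\chi}$ up to order $r$ under the integral sign, which yields $\dfrac{\delta^{|p|}\widehat{\chi}}{\delta u^{p_{1}}\,\delta v^{p_{2}}}$ as a power-of-$i$ multiple of $\widehat{\psi}$ on all of $\mathbb{R}^{2}$. Combined with the previous display this rewrites $\widehat{F}(k,j)$ in terms of $\dfrac{\delta^{|p|}\widehat{\chi}}{\delta u^{p_{1}}\,\delta v^{p_{2}}}(2\pi k,2\pi j)$; the bookkeeping of the powers of $i$ and of the factor $1/2\pi$ here is precisely what produces the constants appearing in (b). The equivalence now follows immediately. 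If (a) holds then $F\equiv c$, whose Fourier coefficients are $c$ at $(0,0)$ and $0$ otherwise, and matching with the above gives (b). Conversely, if (b) holds, then by the above the continuous periodic function $F$ has exactly the Fourier coefficients of the constant $c$, so the uniqueness theorem for Fourier series forces $F\equiv c$, which is statement (a).

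The conceptual content of the argument is the one-line Poisson summation identity; the real obstacle is the analytic bookkeeping — verifying $\psi\in L^{1}$ and the admissibility of differentiating $\widehat{\chi}$ under the integral (both flowing from $M_{r}(\chi)<+\infty$), checking the continuity and periodicity of $F$ (from the uniform-convergence hypothesis), and tracking the unimodular constant and the $2\pi$ factor so that the result emerges in exactly the normalized form of (b). Since the statement is classical one may alternatively just invoke \cite{bv1}; the above is how I would reconstruct the proof.
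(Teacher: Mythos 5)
This theorem is not proved in the paper at all: it is quoted verbatim from the cited reference \cite{bv1}, so there is no in-paper argument to compare against. Your reconstruction is the standard one used in that literature — periodize $\psi(u,v)=u^{p_{1}}v^{p_{2}}\chi(u,v)$, identify the Fourier coefficients of the (continuous, by the uniform-convergence hypothesis) periodization with the samples $\widehat{\psi}(2\pi k,2\pi j)$ via Poisson-type unfolding, convert $\widehat{\psi}$ into the mixed partial derivative of $\widehat{\chi}$ by differentiation under the integral (legitimate since $M_{\eta}(\chi)<+\infty$ for all $\eta\le r$, which also gives $\psi\in L^{1}(\mathbb{R}^{2})$ directly, without the $|u+k|\le|k|+1$ detour), and finish with uniqueness of Fourier coefficients for continuous periodic functions — and it is sound. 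The only loose end is the one you flag yourself: the factor $(-1)^{|p|}c/2\pi$ in (b) depends on the specific Fourier-transform normalization adopted in \cite{bv1}, so the constant bookkeeping would need to be carried out against that convention to land exactly on the stated form, but this is routine and does not affect the equivalence.
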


Using the above result, we can see that the Bochner-Riesz kernel, defined as,
$$ \chi(x,y) = \frac{2^{\gamma} \ \Gamma ( \gamma +1)}{2 \pi} \big( \sqrt{ x^{2}+ y^{2}} \big) ^{-1- \gamma} J_{1+\gamma} \big( \sqrt{ x^{2}+ y^{2}} \big) ,$$
for $ \gamma > \frac{1}{2}$, where, $J_{\lambda}$ is the Bessel function of order $\lambda,$ also satisfies all the required conditions for $ \gamma > \frac{5}{2}$ (see \cite{bv1}).\\



\begin{thebibliography}{}

\bibitem{Tuncer} Acar, T., Aral, A., Mohiuddine, S.A.: On Kantorovich modification of $(p,q)$-Bernstein operators. Iran. J. Sci. Technol. Trans. Sci. 42(3), 1459-1464(2017)



\bibitem{PNAI} Agrawal, P.N., Ispir, N., Kajla,A.: Rate of convergence of Lupaş Kantorovich operators based on Polya distribution. Appl. Math. Comput. 261, 323-329(2015)

\bibitem{PNA3} Agrawal, P.N., Ispir, N.: Degree of Approximation for Bivariate Chlodowsky–Szasz–Charlier Type Operators. Results. Math. 69, Issue 3-4, 369-385 (2016)

\bibitem{PNA6} Agrawal, P.N., Ispir, N., Kajla, A.: GBS operators of Lupa\c{s}-Durrmeyer type based on P\'{o}lya distribution. Results Math. 69, 397-418 (2016)

\bibitem{PNA2} Agrawal, P.N., Ispir, N., Sidharth, M.: Quantitative Estimates of Generalized Boolean Sum Operators of Blending Type. Numer.
Funct. Anal. Optim. 39 (3), 295-307 (2018)


\bibitem{BAD1} Badea, C., Badea, I., Gonska, H.: A test function theorem and approximation by pseudo polynomials. Bull. Austral Math. Soc. 34, 53-64 (1986)

\bibitem{Badea C} Badea, C., Badea, I., Cottin, C., Gonska, H.H.: Notes on the degree of approximation of B-continuous and B-differentiable functions. Approx. Theory Appl.
4, 95–108 (1988)

\bibitem{BAD2} Badea, C., Cottin, C.: Korovkin-type theorems for Generalized Boolean Sum operators, Colloquia Mathematica Societatis Janos Bolyai. In: Approximation
Theory, Kecskemet (Hungary), vol. 58, pp. 51-68 (1990)

\bibitem{Badea} Badea, C.: K-functionals and moduli of smoothness of functions defined on compact metric spaces. Comput. Math. Appl. 30, 23-31(1995)

\bibitem{BVBS} Bardaro, C., Vinti, G., Butzer, P.L., Stens, R.L.: Kantorovich-type generalized sampling series in the setting of Orlicz spaces. Sampl. Theory Signal Image Process.6, 29-52 (2007)

\bibitem{bv1} Bardaro, C., Mantellini, I.: Generalized sampling approximation of bivariate signals: rate of pointwise convergence. Num. Funct. Anal. Optim. 31, 131-154(2010)

\bibitem{BM1} Bardaro, C., Mantellini, I.: Voronovskaja formulae for Kantorovich type generalized sampling series. Int. J. Pure Appl. Math,  62(3),247-262(2010)

\bibitem{BM} Bardaro, C., Mantellini, I.: Asymptotic formulae for multivariate Kantorovich type generalized sampling series. Acta Math. Sin.(Engl. Ser.)27(7),1247-1258(2011).

\bibitem{BM*} Bardaro, C., Mantellini, I.: Asymptotic formulae for linear combinations of generalized sampling operators. Z. Anal. Anwend.32,279-298(2013)

\bibitem{BAR} Barsan, I., Braica, P., Farcas, M.: About approximation of B-continuous functions of several variables by generalized Boolean sum operators of Bernstein type
on a simplex. Creat. Math. Inform. 20(1), 20-23 (2011)

\bibitem{BCG} Bartoccini, B., Costarelli, D., Vinti, G.: Extensions of saturation theorems for the sampling Kantorovich operators. Complex Anal. oper. Theory(2018). https//doi.org/10.1007/s11785-018-0852-z

\bibitem{BOG1} B$\ddot{o}$gel, K.: Mehrdimensionale differentiation von funtionen mehrerer ver$\ddot{a}$nderlicher. J. Reine Angew. Math. 170, 197-217 (1934)

\bibitem{BOG2}  B$\ddot{o}$gel, K.: ¨Uber die mehrdimensionale differentiation, integration und beschr$\ddot{a}$nkte variation. J. Reine Angew. Math. 173, 5-29 (1935)

\bibitem{PLB1} Butzer, P.L., Fischer, A., Stens, R.L.: Generalized sampling approximation of multivariate signals, theory and some applications.  Note Mat.10, 173-191(1990)

\bibitem{Stens1} Butzer, P.L., Stens, R.L.: Sampling theory for not necessarily band-limited functions: a historical overview. SIAM Rev. 34 (1), 40-53(1992)

\bibitem{Stens2} Butzer P.L., Stens, R.L.: Linear prediction by samples from the past. In: Advanced Topics in Shannon Sampling and Interpolation Theory (Ed. R. J. Marks II) Springer Texts Electrical Eng., Springer, New York(1993)

\bibitem{PLB2} Butzer, P.L., Fischer, A., Stens, R.L.: Generalized sampling approximation of multivariate signals; general theory.  Atti Sem. Mat. Fis. Univ. Modena 41 (1),17-37(1993)

\bibitem{COS} Costarelli, D., Vinti, G.: Approximation by multivariate generalized sampling Kantorovich operators in the setting of Orlicz spaces. Boll. Unione Mat. Ital. 4, 445-468(2011)

\bibitem{Dan} Costarelli, D., Vinti, G.: Approximation by nonlinear multivariate sampling Kantorovich type operators and applications to image processing. Num. Funct. Anal. Optim. 34, 819-844(2013)

\bibitem{Dani}  Costarelli, D., Vinti, G.: Order of approximation for sampling Kantorovich operators. J. Int. Eq. Appl. 26(3), 819-844(2013)

\bibitem{DV} Costarelli, D., Vinti, G.: Degree of approximation for nonlinear multivariate sampling Kantorovich operators on some functions spaces. Num. Funct. Anal. Optim. 36(8),964- 990(2015)
\bibitem{GV3} Costarelli, D., Minotti, A.M., Vinti, G.: Approximation of discontinuous signals by sampling Kantorovich series. J. Math. Anal. Appl.450(2),1083-1103(2017)
\bibitem{GV3**} Costarelli, D.,  Vinti, G.: An inverse result of approximation by sampling Kantorovich series. P. Edinburgh Math. Soc.62(1), 265-280(2019)

\bibitem{Cottin} Cottin, C.: Mixed K-functionals, a measure of smoothness for blending type approximations. Math. Z. 204, 69-83(1990)

\bibitem{DOB} Dobrescu, E., Matei, I.: The approximation by Bernstein type polynomials of bidimensional continuous functions. An. Univ. Timi¸soara Ser. S¸ti. Mat.-Fiz. 4,
85-90 (1966). (Romanian)

\bibitem{FAR} Farcas, M.D.: About approximation of B-continuous and B-differentiable functions of three variables by GBS operators of Bernstein type. Creat. Math. Inform. 17(2), 20-27 (2008)


\bibitem{gupta1} Gupta, V., Agarwal, R.P.: Convergence Estimates in Approximation Theory, Springer, Cham, 2014.

\bibitem{gupta2} Gupta, V., Rassias, T.M., Agrawal, P.N., Acu, A.M.: Bivariate Operators of Discrete and Integral Type. In: Recent Advances in Constructive Approximation Theory. Springer Optimization and Its Applications, vol 138. Springer, Cham(2018)

\bibitem{KAJ} Kajla, A., Dan Micl\u{a}u\c{s}, D.:Blending Type Approximation by GBS Operators of Generalized Bernstein-Durrmeyer Type. Results. Math. 73, 1, 1-21(2018)

\bibitem{MIC} Micl$˘\breve{a}$u¸s, D.: On the GBS Bernstein–Stancu’s type operators. Creat. Math. Inform. 22, 73-80 (2013)


\bibitem{POP} Pop, O.T.: Approximation of B-continuous and B-differentiable functions by GBS operators defined by infinite sum. J. Inequal. Pure Appl. Math. 10(1), 8 Article
7 (2009)


\bibitem{Zam} Vinti, G., Zampogni, L.: Approximation by means of nonlinear Kantorovich sampling type operators in Orlicz spaces. J. Approx. Theory. 161, 511-528(2009).


\end{thebibliography}
\end{document}